\definecolor{darkblue}{rgb}{0,0,0.9}
\definecolor{lightblue}{rgb}{.5,.5,.9}
\numberwithin{equation}{section}
\newcommand\C{\mathbb{C}}
\newcommand\Z{\mathbb{Z}}
\theoremstyle{plain}
\newtheorem{theorem}{Theorem}[section]
\newtheorem{lemma}[theorem]{Lemma}
\newtheorem{corollary}[theorem]{Corollary}
\theoremstyle{definition}
\theoremstyle{remark}
\newtheorem{remark}[theorem]{Remark}
\let\uppercasenonmath\@gobble
\title[Topological classification of complex vector bundles over $8$-dimensional spin$^{c}$ manifolds]{Topological classification of complex vector bundles
over $8$-dimensional spin$^{c}$ manifolds}
\author[Huijun Yang]{Huijun Yang}
\address{School of Mathematics and Statistics, Henan University, Kaifeng 475004, Henan, China}
\email{yhj@amss.ac.cn}
\thanks{}
\subjclass[2010]{55R15; 55R50; 32L05}
\keywords{Complex vector bundles; Classification; Chern classes; Postnikov resolution}
\date{\today}
\begin{document}

\maketitle

\begin{abstract}
In this paper, complex vector bundles of rank $r$ over $8$-dimensional spin$^{c}$ manifolds are classified in terms of the Chern classes of the complex vector bundles and the cohomology ring of the manifolds, where $r = 3$ or $4$. 
As an application, we got that two rank $3$ complex vector bundles over $4$-dimensional complex projective spaces $\C P^{4}$ are isomorphic if and only if they have the same Chern classes. 
Moreover, the Chern classes of rank $3$ complex vector bundles over $\C P^{4}$ are determined. 
Combing Thomas's and Switzer's results with our work, we can assert that complex vector bundles over $\C P^{4}$ are all classified.
\end{abstract}

\section{Introduction}
\label{s:intro}

In order to classify the holomorphic vector bundles over a fixed complex manifold, 
it is necessary for us to classify the smooth complex vector bundles over it first. 
In this paper, the topological classification of smooth complex vector bundles over manifolds will be investigated.

For a closed oriented smooth $2n$-manifold $M$, 
denote by $\mathrm{Vect}_{\C}^{r}(M)$ the isomorphic classes of rank $r$ smooth complex vector bundles over $M$. 
If $ r = n$, we are in the stable range, and we have a one to one correspondence between $\mathrm{Vect}_{\C}^{n}(M)$ and $\widetilde{K}(M)$ the reduced $K$ group of $M$.
If $ r = 1$, it is known that  the rank $1$ complex vector bundles over $M$ are classified by their first Chern class $c_1$, 
hence we have a bijection between $\mathrm{Vect}_{\C}^{1}(M)$ and $H^{2}(M; \Z)$ for any $n \ge 0$.

In the case $n = 2$. It can be deduced easily from Peterson \cite{pe59} that the rank $2$ complex vector bundles over $M$ are determined by their Chern classes $c_1$ and $c_2$. Moreover,  a classical result of Wu asserts that there is a bijection between $\mathrm{Vect}_{\C}^{2}(M)$ and $H^{2}(M; \Z) \times H^{4}(M; \Z)$.

In the case $n = 3$. 
Firstly, the classification of rank $3$ complex vector bundles over $3$-dimensional complex manifolds are got by B\v{a}nikc\v{a} and Putinar \cite[Theorem 1 (1)]{bp85, bp87}. 
Secondly, the rank $2$ complex vector bundles over $3$-dimensional complex projective space $\C P^{3}$ are classified by  Atiyah and Rees \cite{ar76} , and moreover these bundles over $3$-dimensional complex manifolds are classified by B\v{a}nic\v{a} and Putinar \cite[Theorem 1 (2)]{bp85, bp87}.

In this paper, the classification of complex vector bundles over $8$-dimensional spin$^{c}$ manifolds will be investigated. Since the classification of rank $2$ complex vector bundles have been studied by Switzer, see \cite{sw791} and \cite{sw792} for instance, we will mainly focus on the classification of rank $3$ and $4$ complex vector bundles. Our main results can be stated as follows. 

For a pathwise connected $CW$-complex $X$, d
we will denote by
\begin{align}
\label{eq:bock}
\cdots \rightarrow H^{i}(X; \Z) \xrightarrow{\times 2} H^{i}(X; \Z) \xrightarrow{\rho_2} H^{i}(X; \Z/2) \xrightarrow{\beta} H^{i+1}(X; \Z) \cdots 
\end{align}
the long exact Bockstein sequence associated to the coefficient sequence
\begin{align*}
0 \rightarrow \Z \xrightarrow{\times 2} \Z \rightarrow \Z/2 \rightarrow 0, 
\end{align*}
where $\rho_2$ and $\beta$ are the $\bmod ~2$ reduction and Bockstein homomorphisms respectively,
and denote by 
$\mathrm{Sq}^{2} \colon H^{i}(X; \Z/2) \rightarrow H^{i+2}(X; \Z/2)$
the Steenrod square.

Throughout this paper, $M$ will be an $8$-dimensional closed oriented spin$^{c}$ manifold.
We will fix an element $c \in H^{2}(M; \Z)$ satisfying
\begin{align*}
\rho_{2}(c) = w_{2}(M),
\end{align*}
where $w_{2}(M)$ is the second Stiefel-Whitney class of $M$. We call it a spin$^{c}$ characteristic class of $M$.
Let 
\begin{align*}
\pi_{\ast} \colon \mathrm{Vect}_{\C}^{3}(M) \to  \mathrm{Vect}_{\C}^{4}(M)
\end{align*}
be the map given by $\pi_{\ast} ( \alpha ) = \alpha \oplus \epsilon$, the Whitney sum of $\alpha \in  \mathrm{Vect}_{\C}^{3}(M)$ and $\epsilon$, where $\epsilon$ is the trivial complex vector bundle over $M$ with rank $1$.
Denote by
\begin{align*}
& \mathcal{C}_{4} \colon  \mathrm{Vect}_{\C}^{4}(M) \rightarrow H^{2}(M; \Z) \times H^{4}(M; \Z) \times H^{6}(M; \Z) \times H^{8}(M; \Z) \\
& \mathcal{C}_{3} \colon \mathrm{Vect}_{\C}^{3}(M)  \rightarrow H^{2}(M; \Z) \times H^{4}(M; \Z) \times H^{6}(M; \Z)
\end{align*}
the maps given by 
\begin{align*}
& \mathcal{C}_{4} (\eta) = (c_{1}(\eta), c_{2}(\eta), c_{3}(\eta), c_{4}(\eta)), \quad \text{ for any } ~ \eta \in \mathrm{Vect}_{\C}^{4}(M), \\
& \mathcal{C}_{3} (\eta) = (c_{1}(\eta), c_{2}(\eta), c_{3}(\eta)), \quad \text{ for any } ~ \eta \in \mathrm{Vect}_{\C}^{3}(M),
\end{align*}
respectively, where $c_{i}(\eta)$ is the $i$-th Chern class of $\eta$.
Let $\mathfrak{B}$ and $\mathfrak{T}$ be the quotient groups defined as
\begin{align*}
& \mathfrak{B} : = \frac{ \beta H^{5}(M; \Z/2) }{ \beta \mathrm{Sq}^{2} \rho_{2} H^{3}(M; \Z) }, \\
& \mathfrak{T} : = \frac{ H^{7}(M; \Z) } { \{ f^{\ast}( \gamma_7 ) + u_1 f^{\ast} ( \gamma_5 ) + u_2 f^{\ast} ( \gamma_3 ) + u_{3} f^{\ast} (\gamma_{1}) ~ | ~ f \in [M, U] \} },
\end{align*}
where $u_{i} \in H^{2i}(M; \Z)$, $1 \le i \le 3$, $[M, U]$ is set of homotopy classes of maps of $M$ into the stable unitary group $U$, and where $\gamma_{1}$, $\gamma_{3}$, $\gamma_{5}$ and $\gamma_{7}$ are generators of the exterior algebra
\begin{align*}
H^{\ast}(U; \Z) \cong  \Lambda(\gamma_{1}, \gamma_{3}, \gamma_{5}, \gamma_{7}, \cdots ).
\end{align*}

\begin{theorem}
 \label{thm:main4}
 Let $M$ be an $8$-dimensional spin$^c$ manifold. \vspace{6pt} \\
 $(A)$ For any even dimensional cohomology classes $u_{i} \in H^{2i}(M; \Z), 1 \le i \le 4$,
 \begin{align*}
 (u_1, u_2, u_3, u_4) \in \mathrm{Im} ~ \mathcal{C}_4
 \end{align*}
 if and only if they satisfy the following three conditions
 \vspace{6pt}
\begin{itemize}
\item[(1)] $\mathrm{Sq}^{2} \rho_{2} u_{2} = \rho_{2}( u_3 + u_1 u_2)$,

\vspace{6pt}

\item[(2)] $\langle u_{4}, [M] \rangle  \equiv \langle p_{1}(M) u_2 - u_1^{2} u_2 + u_1 u_3 - u_2^{2}, [M] \rangle  \mod 3$,

\vspace{6pt}

\item[(3)] $\langle u_{4}, [M] \rangle \equiv \langle - u_1^{2} u_2 + u_1 u_3 +   [ 2 u_2^{2} + p_{1}(M) u_2 - 3c^{2} u_2 ] / 4 +  c(u_1 u_2 - u_3) / 2, [M]\rangle \mod2$, 

\vspace{6pt}

\end{itemize}
where $p_1(M)$ is the first Pontrjagin class of $M$,  $[M]$ is the fundamental class of $M$ and 
$\langle ~ \cdot ~ , ~ \cdot ~ \rangle$
is the Kronecker product.

\vspace{6pt}
Moreover, \vspace{6pt} \\
$(B)$ For any $( u_{1}, u_{2}, u_{3}, u_{4} ) \in \mathrm{Im} ~ \mathcal{C}_{4}$, there is a bijection between 
\begin{align*}
\mathcal{C}_{4}^{-1}( u_{1}, u_{2}, u_{3}, u_{4} ) \quad \text{and} \quad \mathfrak{B},
\end{align*}
where $C^{-1}_{4}(u_1, u_2, u_3, u_{4})$ is the pre-image of $(u_1, u_2, u_3, u_{4})$ under the map $C_{4}$.  
It follows that there is a one to one correspondence between
\begin{align*}
 \mathrm{Vect}_{\C}^{4}(M) \quad  \text{and} \quad \mathfrak{B} \times \mathrm{Im} ~ \mathcal{C}_4.
\end{align*}
\end{theorem}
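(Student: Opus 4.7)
The plan is to use obstruction theory applied to the Postnikov decomposition of $BU(4)$. Since $U(4)$ lies in the stable range through dimension~$7$, Bott periodicity gives $\pi_{2i}(BU(4)) = \Z$ for $i = 1, 2, 3, 4$ and $\pi_{2i-1}(BU(4)) = 0$ in this range, while the next non-trivial homotopy group is the first unstable one, $\pi_{9}(BU(4)) = \pi_{8}(U(4)) = \Z/24 \cong \Z/8 \oplus \Z/3$. Hence $BU(4)$ admits a Postnikov tower
\begin{align*}
BU(4) \to Y_{9} \to Y_{8} \to Y_{6} \to Y_{4} \to Y_{2} = K(\Z, 2),
\end{align*}
built from principal $K(\Z, 2i)$-fibrations for $i = 2, 3, 4$ and a final $K(\Z/24, 9)$-fibration, and for $\dim M = 8$ the induced map $[M, BU(4)] \to [M, Y_{9}]$ is a bijection; so identifying $\mathrm{Im}\,\mathcal{C}_{4}$ reduces to tracking which quadruples of classes are compatible with each $k$-invariant in this tower.

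The $k$-invariant at the stage $Y_{4} \to Y_{6}$ is the non-trivial element of $H^{7}(K(\Z, 2) \times K(\Z, 4); \Z) = \Z/2$ represented by $\beta \mathrm{Sq}^{2} \rho_{2} \iota_{4}$, and the universal relation $\mathrm{Sq}^{2} \rho_{2} \iota_{4} = \rho_{2}(\iota_{6} + \iota_{2} \iota_{4})$ holding in $Y_{6}$ pulls back to condition~$(1)$. Conditions~$(2)$ and~$(3)$ arise from the final $K(\Z/24, 9)$-stage: after splitting $\Z/24 = \Z/8 \oplus \Z/3$, the $3$-local component of the $k$-invariant is an integral cohomology operation of degree~$9$ built from the mod-$3$ Steenrod power $P^{1}$, whose evaluation against $[M]$ produces condition~$(2)$, with $p_{1}(M)$ entering through the mod-$3$ Wu formula on the tangent bundle; at the prime~$2$, the $k$-invariant involves mod-$2$ operations whose integral refinement requires the spin$^{c}$ class $c$, and using $\rho_{2}(c) = w_{2}(M)$ the expressions $[2 u_{2}^{2} + p_{1}(M) u_{2} - 3 c^{2} u_{2}]/4$ and $c(u_{1} u_{2} - u_{3})/2$ can be realised as well-defined integer classes in $H^{8}(M; \Z)$, whose combination matches the $2$-local $k$-invariant to give condition~$(3)$. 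Justifying these divisions by $2$ and $4$ is the main technical obstacle, and the natural route is via the Riemann-Roch-type integrality $\langle \widehat{A}(M) \cdot e^{c/2} \cdot \mathrm{ch}(E), [M] \rangle \in \Z$ valid on any spin$^{c}$ $8$-manifold.

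For part~$(B)$, once the quadruple lies in $\mathrm{Im}\,\mathcal{C}_{4}$ the preimage is a torsor under the group of lifts through the tower preserving all four Chern classes. Because $k_{6}$ is $2$-torsion, the Serre spectral sequence of $K(\Z, 6) \to Y_{6} \to Y_{4}$ shows that the class in $H^{6}(Y_{6}; \Z)$ pulling back to $c_{3}$ has fiber restriction $2 \iota_{6}$, so changing the lift $M \to Y_{6}$ by $\omega \in H^{6}(M; \Z)$ shifts $c_{3}$ by $2\omega$; fixing $c_{3}$ therefore leaves an indeterminacy equal to $H^{6}(M; \Z)[2] = \beta H^{5}(M; \Z/2)$. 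A parallel analysis of the $Y_{8}$- and $K(\Z/24, 9)$-stages shows no further ambiguity survives once $c_{4}$ is fixed (using $H^{9}(M; \Z/24) = 0$ for $\dim M = 8$), while any two lifts whose difference is of the form $\beta \mathrm{Sq}^{2} \rho_{2} v$ with $v \in H^{3}(M; \Z)$ are related by a compensating modification of the lift of $c_{2}$ consistent with condition~$(1)$ and hence define isomorphic bundles. The quotient by this subgroup yields precisely $\mathfrak{B}$, giving the bijection.
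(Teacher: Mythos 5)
Your overall two-tool strategy (Postnikov-style obstruction theory plus spin$^{c}$ Riemann--Roch integrality) is the right one, but the mechanism you assign to conditions $(2)$ and $(3)$ is wrong, and the computation that actually produces them is missing. You attribute $(2)$ and $(3)$ to the final $K(\Z/24, 9)$-stage of the Postnikov tower of $BU(4)$. For $\dim M = 8$ that stage is invisible: the obstructions and difference classes for compressing a map $M \to Y_{8}$ live in $H^{\ge 9}(M; \Z/24) = 0$, so $[M, BU(4)] \to [M, Y_{8}]$ is already a bijection, and indeed the paper simply identifies $[M, BU(4)]$ with $[M, BU]$ at the outset. Moreover, conditions $(2)$ and $(3)$ involve $p_{1}(M)$ and the spin$^{c}$ class $c$, so they cannot arise as pullbacks of any universal $k$-invariant; they are manifold-dependent integrality constraints. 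Their actual source is twofold: the fiber restriction of $c_{4}$ at the $K(\Z, 8)$-stage is $6\iota_{8}$ (Bott divisibility), so $u_{4}$ is only constrained modulo $6$ by $(u_{1}, u_{2}, u_{3})$; and the specific residue modulo $6$ is extracted from the differential Riemann--Roch theorem, namely the integrality of $\langle \hat{\mathfrak{A}}(M) e^{c/2} [ ch(\eta) - ch(l_{\eta}) - \dim_{\C}\eta + 1 ], [M] \rangle$, multiplied by $6$ and $24$ to get the mod $2$ and mod $3$ congruences (Lemma \ref{lem:c4}). You correctly flag the divisions by $2$ and $4$ as ``the main technical obstacle'' and point at the right integrality theorem, but you never carry out that computation, and it is the substance of part $(A)$. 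You also omit the realization step: having built $\eta'$ with the prescribed $c_{1}, c_{2}, c_{3}$, one must realize the difference $u_{4} - c_{4}(\eta')$, an arbitrary class whose evaluation on $[M]$ is divisible by $6$, as $c_{4}$ of a bundle with vanishing lower Chern classes; the paper does this by pulling back from $S^{8}$ along the degree-one collapse map and quoting Peterson's realization theorem (Lemma \ref{lem:c46}). Finally, your statement that ``the universal relation $\mathrm{Sq}^{2}\rho_{2}\iota_{4} = \rho_{2}(\iota_{6} + \iota_{2}\iota_{4})$ holds in $Y_{6}$'' is not meaningful as written: there is no class in $H^{6}(Y_{6}; \Z)$ restricting to the fundamental class of the fiber (only twice it survives, as you yourself use later). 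The clean formulation is the paper's: the $k$-invariant of the Moore--Postnikov resolution of $(c_{1}, c_{2}, c_{3}) \colon BU \to K(\Z,2) \times K(\Z,4) \times K(\Z,6)$ is $\mathrm{Sq}^{2}\rho_{2} l_{4} + \rho_{2} l_{6} + \rho_{2} l_{2} \cdot \rho_{2} l_{4}$, and its vanishing on $(u_{1}, u_{2}, u_{3})$ is exactly condition $(1)$.

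For part $(B)$ your answer --- the $2$-torsion $H^{6}(M;\Z)[2] = \beta H^{5}(M; \Z/2)$ modulo $\beta \mathrm{Sq}^{2} \rho_{2} H^{3}(M; \Z)$ --- agrees with $\mathfrak{B}$, and is what the paper obtains by a different route (restricting to $M^{\circ} = M - int(D^{8})$, identifying $\mathcal{C}_{4}^{-1}(u)$ with $V_{0}(M^{\circ})$, and computing $\mathrm{Cok}\,(\Omega k)_{\ast}$ for the resolution above via the James--Thomas exact sequence). But the step where you quotient by $\beta \mathrm{Sq}^{2} \rho_{2} H^{3}(M; \Z)$ --- ``related by a compensating modification of the lift of $c_{2}$'' --- is an assertion, not an argument. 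That subgroup is precisely the image of the looped $k$-invariant $\mathrm{Sq}^{2}\rho_{2}\iota_{3} + \rho_{2}\iota_{5}$ acting through the $\Delta$-operator of James and Thomas, and without invoking that machinery (or an equivalent) the enumeration of lifts with fixed Chern classes is not justified; in particular you have not shown that no \emph{larger} identification occurs, which is what would make the bijection with $\mathfrak{B}$ fail.
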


\begin{theorem}
\label{thm:main3}
Let $M$ be an $8$-dimensional spin$^{c}$ manifold. \vspace{6pt} \\
$(C)$ For any $\eta \in \mathrm{Vect}_{\C}^{4}(M)$, the necessary and sufficient condition for $\eta$ to lie in the image of $\pi_{\ast}$ is
\begin{align*}
c_{4}(\eta) = 0.
\end{align*}
Therefore, for any cohomology classes $u_{i} \in H^{2i}(M; \Z)$, $1 \le i \le 3$,  
\begin{align*}
(u_1, u_2, u_3) \in \mathrm{Im} ~ \mathcal{C}_3 \quad \text{if and only if} \quad (u_1, u_2, u_3, 0) \in \mathrm{Im} ~ \mathcal{C}_4.
\end{align*}

Furthermore,  \vspace{6pt} \\
$(D)$ If $(u_1, u_2, u_3) \in \mathrm{Im} ~ \mathcal{C}_3$, then $C^{-1}_{3}(u_1, u_2, u_3)$ is equivalent, as a set, to $\mathfrak{B} \times \mathfrak{T}$.

\end{theorem}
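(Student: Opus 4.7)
For part (C), one direction is immediate: if $\eta \cong \alpha \oplus \epsilon$ then by multiplicativity of the total Chern class $c_{4}(\eta) = c_{4}(\alpha) = 0$, since $\alpha$ has rank $3$. The converse says that a rank-$4$ bundle over $M$ with vanishing $c_{4}$ admits a nowhere-zero section, which splits off a trivial line. I would prove this by obstruction theory in the unit sphere bundle $S(\eta) \to M$: the fiber $S^{7} = U(4)/U(3)$ is $6$-connected, so the primary obstruction lies in $H^{8}(M; \pi_{7}(S^{7})) = H^{8}(M; \Z)$ and is exactly $c_{4}(\eta)$, while all higher obstruction groups vanish because $\dim M = 8$. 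A nowhere-zero section then yields the desired splitting $\eta \cong \alpha \oplus \epsilon$, proving $\eta \in \mathrm{Im}\,\pi_{\ast}$. The consequence for $\mathrm{Im}\,\mathcal{C}_{3}$ is automatic, since $\mathcal{C}_{3} = \mathcal{C}_{4} \circ \pi_{\ast}$ after inserting $c_{4} = 0$.

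For part (D), the plan is to use the surjection $\pi_{\ast} \colon \mathcal{C}_{3}^{-1}(u_{1}, u_{2}, u_{3}) \twoheadrightarrow \mathcal{C}_{4}^{-1}(u_{1}, u_{2}, u_{3}, 0)$ furnished by (C), to identify the target with $\mathfrak{B}$ via Theorem \ref{thm:main4}(B), and to identify each fiber $\pi_{\ast}^{-1}(\eta)$ with $\mathfrak{T}$. After choosing a set-theoretic section of $\pi_{\ast}$, these two identifications combine into a bijection $\mathcal{C}_{3}^{-1}(u_{1}, u_{2}, u_{3}) \cong \mathfrak{B} \times \mathfrak{T}$.

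To compute the fiber, fix $\eta \in \mathcal{C}_{4}^{-1}(u_{1}, u_{2}, u_{3}, 0)$ together with a base splitting $\eta \cong \alpha_{0} \oplus \epsilon$. Every other $\alpha$ with $\alpha \oplus \epsilon \cong \eta$ corresponds to a nowhere-zero section of $\eta$, and two sections produce isomorphic $\alpha$ iff they differ, up to homotopy, by a bundle automorphism of $\eta$. Because $S^{7}$ is $6$-connected and $\dim M = 8$, the difference-cochain construction makes the set of homotopy classes of nowhere-zero sections (once the base point $s_{0}$ is fixed) into a torsor over $H^{7}(M; \Z)$; and the stabilization $U(4) \to U$ identifies $\pi_{0} \mathrm{Aut}(\eta) \cong [M, U]$ in this dimension range. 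The fiber $\pi_{\ast}^{-1}(\eta)$ is therefore $H^{7}(M; \Z)$ modulo the resulting action of $[M, U]$. The crucial claim is that this action sends the difference class of $(f \cdot s, s)$ to
\begin{align*}
f^{\ast}(\gamma_{7}) + u_{1} f^{\ast}(\gamma_{5}) + u_{2} f^{\ast}(\gamma_{3}) + u_{3} f^{\ast}(\gamma_{1}) \in H^{7}(M; \Z),
\end{align*}
so that the quotient is precisely $\mathfrak{T}$.

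The main obstacle is justifying this twisted action formula: the correction terms $u_{i} f^{\ast}(\gamma_{7 - 2i})$ encode the Postnikov $k$-invariants of the fibration $S^{7} \to BU(3) \to BU(4)$ and must be computed by a spectral-sequence analysis, transgressing the odd generators $\gamma_{2i-1}$ of $H^{\ast}(U; \Z) = \Lambda(\gamma_{1}, \gamma_{3}, \gamma_{5}, \gamma_{7}, \dots)$ against the Chern classes of $\eta$. Once this formula is in place, the identification $\pi_{\ast}^{-1}(\eta) \cong \mathfrak{T}$ is immediate and, combined with (B), completes the bijection $\mathcal{C}_{3}^{-1}(u_{1}, u_{2}, u_{3}) \cong \mathfrak{B} \times \mathfrak{T}$ asserted by (D).
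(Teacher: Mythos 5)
Your argument for part (C) is correct and is essentially the paper's argument in different clothing: the paper runs the Postnikov resolution of $\pi \colon BU(3) \to BU$ and finds that the only $k$-invariant in the range of $\dim M = 8$ is $c_4 \in H^{8}(BU; \Z)$, while you work with the sphere bundle of $\eta$ with fiber $S^{7} = U(4)/U(3)$, whose primary obstruction is the Euler class $c_{4}(\eta)$ and whose higher obstructions lie in $H^{\ge 9}(M; -) = 0$. Both are complete proofs of (C).

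For part (D) the architecture you propose matches the paper's --- identify $\mathcal{C}_{3}^{-1}(u_1,u_2,u_3)$ with $\pi_{\ast}^{-1}\bigl(\mathcal{C}_{4}^{-1}(u_1,u_2,u_3,0)\bigr)$, use Theorem \ref{thm:main4}(B) for the base, and show each fiber $\pi_{\ast}^{-1}(\eta)$ is a copy of $\mathfrak{T}$ --- but the decisive step is missing. You present $\pi_{\ast}^{-1}(\eta)$ as the $H^{7}(M;\Z)$-torsor of homotopy classes of nowhere-zero sections modulo an action of $[M,U]$, and you assert that $f$ acts by adding $f^{\ast}(\gamma_{7}) + u_{1} f^{\ast}(\gamma_{5}) + u_{2} f^{\ast}(\gamma_{3}) + u_{3} f^{\ast}(\gamma_{1})$; but you explicitly defer the justification of this formula to an unspecified ``spectral-sequence analysis.'' That formula is the entire content of the identification $\pi_{\ast}^{-1}(\eta) \cong \mathfrak{T}$ --- without it you have only shown the fiber is \emph{some} quotient of $H^{7}(M;\Z)$ --- so as written the proof of (D) is incomplete. (Two smaller points also need care: $U(4) \to U$ is only $7$-connected, so $[M, U(4)] \to [M, U]$ need not be injective when $\dim M = 8$ and the identification $\pi_{0}\mathrm{Aut}(\eta) \cong [M,U]$ requires an argument; and ``sections modulo automorphisms'' versus ``rank-$3$ summands up to isomorphism'' needs a word about homotopic sections having isomorphic complements.) The paper closes exactly this gap by quoting the James--Thomas enumeration theorem (Lemma \ref{lem:delta}, from \cite{jt65}): for the principal fibration over the $H$-space $(BU,\mu)$ classified by $k = c_{4}$, the fiber $q_{\ast}^{-1}(\eta)$ is $\mathrm{Cok}\,\Delta(c_{4},\eta)$, and the correction terms of $\Delta$ are read off from the coproduct $\mu^{\ast}(c_{4}) = c_{4}\times 1 + 1\times c_{4} + c_{1}\times c_{3} + c_{2}\times c_{2} + c_{3}\times c_{1}$ together with the cohomology suspensions $\sigma(c_{r}) = \gamma_{2r-1}$; no new spectral-sequence computation is needed. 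To repair your write-up, either rederive that difference-class formula in your section-theoretic setting or cite James--Thomas directly.
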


\begin{remark}
Under the condition $(1)$ of Theorem \ref{thm:main4}, one may find from the proof of Theorem \ref{thm:main4} that: \vspace{5pt} \\
$(a)$ the rational number 
\begin{align*}
\langle - u_1^{2} u_2 + u_1 u_3 +   [ 2 u_2^{2} + p_{1}(M) u_2 - 3c^{2} u_2 ] / 4 +  c(u_1 u_2 - u_3) / 2, [M]\rangle
\end{align*}
is an integer, so it make sense to take congruent classes modulo $2$; \vspace{5pt} \\
$(b)$ the congruence in condition $(3)$ in Theorem \ref{thm:main4} is not depend on the choice of $c$.

\end{remark}

\begin{remark}
If we suppose more that $M$ is a $4$-dimensional complex manifold, then we have $p_{1}(M) = c_{1}^{2}(M) - 2 c_{2}(M)$, and $c = c_{1}(M)$. 
Thus one may find that, under the assumptions that $H^{6}(M; \Z)$ and $H^{7}(M; \Z)$ have no $2$-torsion, 
the fact $(A)$ of Theorem \ref{thm:main4}  has been got by 
B\v{a}nic\v{a} and Putinar \cite[Proposition 3.1]{bp06}.
\end{remark}

Peterson \cite[Theorem 3.2]{pe59} tells us that if $H^{6}(M; \Z)$ has no $2$-torsion, then the map $\mathcal{C}_{4}$ is injective.
Obviously, as a corollary of Theorem \ref{thm:main4}, this statement can be generalized as
\begin{corollary}
Let $M$ be a $8$-dimensional spin$^{c}$ manifold. 
Suppose that $\mathfrak{B} = 0$.
Then the map $\mathcal{C}_{4}$ is an injection.
\end{corollary}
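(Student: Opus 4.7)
The plan is to deduce this injectivity statement directly from part $(B)$ of Theorem \ref{thm:main4}, which has already been granted. Recall that a map of sets is injective if and only if every fiber has cardinality at most one. Thus, to show that $\mathcal{C}_4$ is an injection, it suffices to prove that for every tuple $(u_1, u_2, u_3, u_4) \in H^{2}(M; \Z) \times H^{4}(M; \Z) \times H^{6}(M; \Z) \times H^{8}(M; \Z)$, the preimage $\mathcal{C}_4^{-1}(u_1, u_2, u_3, u_4)$ contains at most one element.

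For any tuple lying outside $\mathrm{Im}~\mathcal{C}_4$ the preimage is empty, so there is nothing to verify. For a tuple lying in $\mathrm{Im}~\mathcal{C}_4$, part $(B)$ of Theorem \ref{thm:main4} provides a bijection between $\mathcal{C}_4^{-1}(u_1, u_2, u_3, u_4)$ and the quotient group $\mathfrak{B}$. The hypothesis $\mathfrak{B} = 0$ means that $\mathfrak{B}$ is the trivial group, which as a set contains exactly one element; hence every non-empty fiber of $\mathcal{C}_4$ is a singleton. This yields injectivity of $\mathcal{C}_4$, completing the argument.

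There is no genuine obstacle to the proof, since the entire content sits in Theorem \ref{thm:main4}$(B)$; the corollary merely extracts the set-theoretic consequence of having all fibers of size $|\mathfrak{B}|$. In particular, no further manipulation of Chern classes, cohomology operations, or Postnikov data is needed at this stage.
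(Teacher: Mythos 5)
Your proof is correct and matches the paper's (implicit) argument: the paper states this corollary as an immediate consequence of Theorem \ref{thm:main4}$(B)$, exactly as you derive it by noting that every non-empty fiber of $\mathcal{C}_4$ is in bijection with the trivial group $\mathfrak{B}$ and is therefore a singleton. Nothing further is needed.
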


At last, as an application, let us consider the classification of complex vector bundles over the $4$-dimensional complex projective space $\C P^{4}$.
Set $t = - c_{1}(\gamma) \in H^{2}(M; \Z)$, where $\gamma$ is the canonical line bundle over $\C P^{4}$.
It is well known that the cohomology ring
$ H^{\ast}(M; \Z) = \Z [t] / \langle t^{5} \rangle $
and the total Chern class $c(\C P^{4}) = (1 + t )^{5}$. 
Hence $p_{1}(\C P^{4}) = 5 t^{2}$, $c = c_{1}(\C P^{4}) = 5 t$, and it can be deduced easily from Theorems \ref{thm:main4} and \ref{thm:main3} that

\begin{corollary}\label{coro:4}
The map 
\begin{align*}
 \mathcal{C}_{4} \colon \mathrm{Vect}_{\C}^{4}(\C P^{4}) \rightarrow H^{2}(\C P^{4}; \Z) \times H^{4}(\C P^{4}; \Z) \times H^{6}(\C P^{4}; \Z) \times H^{8}(\C P^{4}; \Z), 
\end{align*}
given by 
$\mathcal{C}_{4} (\eta) = ( c_{1}(\eta), c_{2}(\eta), c_{3}(\eta), c_{4}(\eta) )$,
is injective. 
\vspace{6pt}

Moreover, 
\begin{align*}
(a_{1} t, a_{2} t^{2}, a_{3} t^{3}, a_{4} t^{4}) \in \mathrm{Im} ~ \mathcal{C}_{4},
\end{align*}
if and only if the integers $a_{i} \in \Z$, $1 \le i \le 4$, satisfy the following two conditions \vspace{6pt}
\begin{itemize}
\item[$(i)$] $2 a_{4} \equiv  a_{2}^{2} +  a_{2} +  a_{1} ( a_{1} a_{2} - a_{3} ) \mod 3$, 
\vspace{6pt}
\item[$(ii)$] $2 a_{4} \equiv a_{2}^{2} + a_{2} + a_{1} a_{2} - a_{3} \mod 4$.
\end{itemize}
\end{corollary}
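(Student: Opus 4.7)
The plan is to specialize Theorem \ref{thm:main4} to $M = \C P^{4}$, using the additive basis $\{1, t, t^{2}, t^{3}, t^{4}\}$ of $H^{\ast}(\C P^{4}; \Z)$, the numerical data $p_{1}(\C P^{4}) = 5 t^{2}$ and $c = 5 t$, and the evaluation $\langle t^{4}, [\C P^{4}] \rangle = 1$.

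For the injectivity statement, I would first observe that $H^{5}(\C P^{4}; \Z/2) = 0$, so the group $\mathfrak{B}$ of Theorem \ref{thm:main4}(B) vanishes. The corollary immediately preceding then yields that $\mathcal{C}_{4}$ is injective; equivalently, part $(B)$ of Theorem \ref{thm:main4} produces a singleton fiber over each element of $\mathrm{Im}\,\mathcal{C}_{4}$.

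For the image, I would substitute $u_{i} = a_{i} t^{i}$ into conditions $(1)$, $(2)$, $(3)$ of Theorem \ref{thm:main4}(A) and pair the resulting cohomology classes with $[\C P^{4}]$. Condition $(1)$ collapses to $a_{3} \equiv a_{1} a_{2} \pmod{2}$, since the Cartan formula gives $\mathrm{Sq}^{2} \rho_{2}(t^{2}) = 2 t^{3} \equiv 0 \pmod{2}$. Condition $(2)$ translates to $a_{4} \equiv 5 a_{2} - a_{1}^{2} a_{2} + a_{1} a_{3} - a_{2}^{2} \pmod{3}$; multiplying through by $2$ and reducing integer coefficients modulo $3$ produces $(i)$. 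For condition $(3)$, I would evaluate the rational expression on $[\C P^{4}]$, multiply the resulting congruence by $2$, and reduce modulo $4$ to obtain
\begin{align*}
2 a_{4} \equiv a_{2}^{2} + a_{2} + a_{1} a_{2} - a_{3} + 2 a_{1} (a_{1} a_{2} + a_{3}) \pmod{4}.
\end{align*}
Under condition $(1)$ the last term is divisible by $4$ and one recovers $(ii)$; conversely $(ii)$ forces $a_{1} a_{2} - a_{3}$ to be even, which is condition $(1)$. Hence $(1), (2), (3)$ of Theorem \ref{thm:main4} are jointly equivalent to the pair $(i), (ii)$.

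The main difficulty is bookkeeping rather than conceptual: one has to verify that the rational expression of condition $(3)$ is in fact an integer under condition $(1)$ (the content of the remark following Theorem \ref{thm:main4}), and then track residues modulo $4$ carefully so that the extraneous cross term $2 a_{1}(a_{1} a_{2} + a_{3})$ can be absorbed. This merging of condition $(1)$ into the mod $4$ statement $(ii)$ is the key algebraic observation that reduces the three conditions of Theorem \ref{thm:main4} to the two conditions of the corollary.
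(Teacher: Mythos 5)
Your proposal is correct and follows exactly the route the paper intends: the paper offers no separate proof of this corollary beyond asserting it "can be deduced easily" from Theorem \ref{thm:main4} with $p_{1}(\C P^{4}) = 5t^{2}$, $c = 5t$, and your computation (including the vanishing of $\mathfrak{B}$ since $H^{5}(\C P^{4};\Z/2)=0$, the equivalence of $(2)$ with $(i)$ after multiplying by the unit $2$ mod $3$, and the absorption of condition $(1)$ into $(ii)$ via the evenness of $a_{2}^{2}+a_{2}$) supplies precisely the omitted bookkeeping. The residue arithmetic checks out, so nothing further is needed.
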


\begin{corollary} \label{coro:3}
The map 
\begin{align*}
\mathcal{C}_{3} \colon \mathrm{Vect}_{\C}^{3}(\C P^4)  \rightarrow H^{2}(\C P^4; \Z) \times H^{4}(\C P^4; \Z) \times H^{6}(\C P^4; \Z),
\end{align*}
given by 
$\mathcal{C}_{3} (\eta) = ( c_{1}(\eta), c_{2}(\eta), c_{3}(\eta) )$,
is an injection.
\vspace{6pt}

Furthermore, 
\begin{align*}
(a_{1} t, a_{2} t^{2}, a_{3} t^{3}) \in \mathrm{Im} ~ \mathcal{C}_{3},
\end{align*}
if and only if the integers $a_{i} \in \Z$, $1 \le i \le 3$, satisfy the following two conditions \vspace{6pt}
\begin{itemize}
\item[$(i)^{\prime}$] $ a_{2}^{2} +  a_{2} +  a_{1} ( a_{1} a_{2} - a_{3} ) \equiv 0 \mod 3$, 
\vspace{6pt}
\item[$(ii)^{\prime}$] $ a_{2}^{2} + a_{2} + a_{1} a_{2} - a_{3} \equiv 0 \mod 4$.
\end{itemize}

\end{corollary}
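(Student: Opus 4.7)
The plan is to deduce Corollary~\ref{coro:3} entirely from Theorem~\ref{thm:main3} combined with the already established Corollary~\ref{coro:4}, using the very simple cohomology of $\C P^{4}$ to kill all the obstruction groups that appear in the general statement. Concretely, I will treat injectivity and the image characterization separately.

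For \emph{injectivity} of $\mathcal{C}_{3}$ I would invoke part $(D)$ of Theorem~\ref{thm:main3}, which identifies each fiber of $\mathcal{C}_{3}$ with $\mathfrak{B}\times\mathfrak{T}$, and then show that both factors vanish when $M=\C P^{4}$. Since $H^{\ast}(\C P^{4};\Z)=\Z[t]/\langle t^{5}\rangle$ is concentrated in even degrees and is torsion-free, the universal coefficient theorem gives $H^{5}(\C P^{4};\Z/2)=0$ and $H^{7}(\C P^{4};\Z)=0$. Looking at the definitions, $\mathfrak{B}=\beta H^{5}(M;\Z/2)/\beta\mathrm{Sq}^{2}\rho_{2}H^{3}(M;\Z)$ is therefore automatically trivial, and $\mathfrak{T}$ is a quotient of the zero group $H^{7}(\C P^{4};\Z)$, so also trivial. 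Hence $\mathcal{C}_{3}^{-1}(u_{1},u_{2},u_{3})$ is a singleton for every element of the image.

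For the \emph{image characterization} I would appeal to part $(C)$ of Theorem~\ref{thm:main3}: $(u_{1},u_{2},u_{3})\in\mathrm{Im}\,\mathcal{C}_{3}$ if and only if $(u_{1},u_{2},u_{3},0)\in\mathrm{Im}\,\mathcal{C}_{4}$. With $u_{i}=a_{i}t^{i}$ and $a_{4}=0$, I would substitute directly into the two congruences $(i)$ and $(ii)$ of Corollary~\ref{coro:4}. Setting $2a_{4}=0$ on the left turns $(i)$ into $a_{2}^{2}+a_{2}+a_{1}(a_{1}a_{2}-a_{3})\equiv 0\pmod 3$ and $(ii)$ into $a_{2}^{2}+a_{2}+a_{1}a_{2}-a_{3}\equiv 0\pmod 4$, matching $(i)^{\prime}$ and $(ii)^{\prime}$ verbatim.

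Since the verifications above are essentially routine substitutions once the vanishing $\mathfrak{B}=\mathfrak{T}=0$ has been recorded, there is no real obstacle; the only thing demanding a little care is making sure I am entitled to quote Corollary~\ref{coro:4} here (so that I do not need to redo the arithmetic to specialize Theorem~\ref{thm:main4} to $\C P^{4}$), and verifying that the specialization $a_{4}=0$ is legitimate, which is exactly what part $(C)$ of Theorem~\ref{thm:main3} guarantees.
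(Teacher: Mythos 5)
Your proposal is correct and follows exactly the route the paper intends: the paper derives Corollary \ref{coro:3} by specializing Theorems \ref{thm:main4} and \ref{thm:main3} to $\C P^{4}$, where $H^{5}(\C P^{4};\Z/2)=0$ and $H^{7}(\C P^{4};\Z)=0$ force $\mathfrak{B}=\mathfrak{T}=0$, and the image conditions come from setting $a_{4}=0$ in the congruences of Corollary \ref{coro:4}. Nothing is missing.
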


\begin{remark}
One may find that the conditions $(i)$, $(ii)$ in Corollary \ref{coro:4} and $(i)^{\prime}$, $(ii)^{\prime}$ in Corollary \ref{coro:3} are just the Schwarzenberger condition (see for instance \cite[Appendix I, p. 165, Theorem 22.4.1]{hi78}). 
\end{remark}

\begin{remark}
In fact, the facts of Corollary \ref{coro:4} have been got by Thomas \cite[Theorem A]{th74}.
Recall that the classification of rank $2$ complex vector bundles over $\C P^4$ have been done by Switzer \cite[Theorem 2]{sw792}.
Therefore, combing their results with Corollary \ref{coro:3}, yields that complex vector bundles over $\C P^{4}$ are all classified.

\end{remark}

After some preliminaries in Section \ref{s:pre}, Theorems \ref{thm:main4} and \ref{thm:main3} will be proved in Section \ref{s:main4} and  \ref{s:main3} respectively.


\section{Preliminaries}
\label{s:pre}

Some preliminaries are needed to prove the main results of this paper.

We work in the category of $CW$-complexes with basepoints. 
For any $CW$-complex $X$, we denote the basepoint of $X$ by $e$,  
the space of loops on X, based at $e$, by $\Omega X$.
We will denote by $[X, Y]$ the set of homotopy classes of maps of $X$ into a $CW$-complex $Y$. 
For a map $f \colon Y \to Z$ between $CW$-complexes $Y$ and $Z$, denote by
\begin{align*}
f_{\ast} \colon [X, Y] \rightarrow [X, Z] \quad \text{and} \quad f^{\ast} \colon [Z, X] \to [Y, X]
\end{align*}
the functions induced by $f$.
Sometimes it is convenient to write $\theta_{\ast}$ and $\theta^{\ast}$ instead of $f_{\ast}$ and $f^{\ast}$ respectively, where $\theta \in [Y, Z]$ denotes the homotopy class of $f$.

Let $K(G, n)$ be the Eilenberg-Mac Lane space of type $(G, n)$ with $G = \Z$ or $\Z/2$. For any $CW$-complex $X$, we will identify 
\begin{align*}
[X, K(G, n)] = H^{n}(X; G)
\end{align*}
in the usual way.
Let
\begin{align*}
\sigma \colon H^{n}(X; G) \to H^{n-1}(\Omega X; G)
\end{align*}
be the cohomology suspension defined as:
for any $h \in H^{n}(X; G)$ represented by the map 
\begin{align*}
h \colon X \to K(G, n),
\end{align*} 
the cohomology suspension of $h$, $\sigma ( h ) \in H^{n-1}(\Omega X; G)$, is the class represented by the map 
\begin{align*}
\Omega h \colon \Omega X \to K(G, n-1).
\end{align*}
Recall from \cite[p. 382, Theorem (3.1)]{wh78} that we have
\begin{lemma}
\label{lem:sigma}
Suppose that $X$ is $m$-connected.
Then the cohomology suspension
\begin{align*}
\sigma \colon H^{n}(X; G) \rightarrow H^{n-1}(\Omega X; G)
\end{align*}
is an isomorphism for $n \le 2m $.
\end{lemma}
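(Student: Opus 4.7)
The plan is to deduce the lemma from the Serre spectral sequence of the path-loop fibration
\[
\Omega X \hookrightarrow PX \xrightarrow{p} X,
\]
in which the total space $PX$ is contractible. Under the hypothesis that $X$ is $m$-connected (the case $m=0$ being vacuous, so I assume $m \ge 1$, whence $X$ is simply connected and the spectral sequence has simple coefficients), the spectral sequence
\[
E_{2}^{p,q} = H^{p}(X; H^{q}(\Omega X; G)) \Longrightarrow H^{p+q}(PX; G)
\]
abuts to zero in positive total degree.

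First I would read off from the connectivity assumption that $H^{p}(X; A) = 0$ for $0 < p \le m$ (for any coefficient group $A$), and, using $\pi_{i}(\Omega X) = \pi_{i+1}(X)$, that $H^{q}(\Omega X; G) = 0$ for $0 < q \le m-1$. Consequently $E_{2}^{p,q} = 0$ whenever $p, q > 0$ unless both $p \ge m+1$ and $q \ge m$, i.e.\ unless $p + q \ge 2m + 1$. A quick bidegree check then shows that, for $n \le 2m$, the only possibly nontrivial differential touching either of the axis entries $E_{r}^{0, n-1}$ and $E_{r}^{n, 0}$ is the transgression $d_{n} \colon E_{n}^{0, n-1} \to E_{n}^{n, 0}$: any other candidate differential would have source or target at some $E_{r}^{p,q}$ with $p, q > 0$, forcing $p + q \ge 2m+1 > n$. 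Since both axis entries must die at $E_{\infty}$, the transgression $d_{n}$ is an isomorphism in this range.

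Finally, I would identify $d_{n}$ with the cohomology suspension $\sigma$. Given $h \in H^{n}(X; G)$ represented by a map $h \colon X \to K(G, n)$, the map $h$ induces a map of path-loop fibrations over $X$ and $K(G, n)$; naturality of the Serre spectral sequence then reduces the identification to the universal statement that the fundamental class $\iota_{n-1}$ of $K(G, n-1) = \Omega K(G, n)$ transgresses to $\iota_{n}$ in $K(G, n)$, which is classical. Under this identification, $\sigma$ is the inverse of $d_{n}$ (up to a sign invisible at the level of abelian groups), and therefore $\sigma$ is an isomorphism for $n \le 2m$.

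The main obstacle is not the dimension count but this last identification; verifying that the spectral-sequence transgression in the path-loop fibration agrees with the suspension map defined via $\Omega h$ requires care with the diagrammatics of suspension, looping, and evaluation. This is the point at which one either cites Whitehead's detailed treatment, which is what the paper does, or re-does the comparison on the level of cochain representatives.
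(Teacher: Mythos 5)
Your argument is correct, but note that the paper does not prove this lemma at all: it simply quotes it from Whitehead \cite[p.~382, Theorem (3.1)]{wh78}, so there is no in-paper proof to compare against. What you have written is the standard proof of that cited theorem. The dimension count is right: with $X$ being $m$-connected one gets $E_{2}^{p,q}=0$ for $p,q>0$ unless $p+q\ge 2m+1$, so for $n\le 2m$ the entries $E^{0,n-1}$ and $E^{n,0}$ survive unchanged to the $E_{n}$-page and are linked only by the transgression $d_{n}$, which must therefore be an isomorphism because the path space is contractible. You also correctly isolate the one step that genuinely needs care, namely identifying the suspension $\sigma(h)=[\Omega h]$ with the inverse of $d_{n}$; this follows by naturality from the universal case $\iota_{n-1}\mapsto\iota_{n}$ in the path--loop fibration over $K(G,n)$, and in the range $n\le 2m$ there is no indeterminacy, so the identification is an honest equality of homomorphisms. (Whitehead's own treatment phrases $\sigma$ as $\delta^{-1}\circ p^{\ast}$ through $H^{n}(PX,\Omega X;G)$ and runs the Serre exact sequence of the fibration rather than the full spectral sequence, but this is the same argument in different clothing.) Two small housekeeping points: the statement should be read in reduced cohomology (or for $n\ge 2$) to avoid the degenerate cases $n\le 1$, and your reduction to $m\ge 1$ is what guarantees simple coefficients in the spectral sequence; both are handled correctly in your write-up.
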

Let $B$ be a pathwise connected $CW$-complexes with map
$ \mu \colon B \times B \rightarrow B$,
Suppose that $(B, \mu)$ is an $H$-space.
Let $q \colon E \rightarrow B$ denote the principal fiber space over $B$ with fiber $K(G, n-1)$, and $k$ as classifying map:
\begin{equation*}
\begin{split}
\xymatrix{
          K(G, n-1) ~\ar@{^{(}->}[r]^-{j}        &       E  \ar[d]^{q} &   \\
                 & B \ar[r]^-{k} & K(G, n)        }
\end{split}
\end{equation*}
Then we have an exact sequence 
\begin{align*}
\cdots \rightarrow [X, \Omega B] \xrightarrow{ (\Omega k)_{\ast} }  [X, K(G, n-1)] \xrightarrow{j_{\ast}} [X, E] \xrightarrow{q_{\ast}} [X, B] \xrightarrow{ k_{\ast} } [X, K(G, n)],
\end{align*}
where $k_{\ast}$, $q_{\ast}$ and $j_{\ast}$ are the induced functions, which are generally not homomorphisms, and  where $(\Omega k)_{\ast}$ is the induced homomorphism.

For any $\eta \in [X, B]$, suppose that $n \ge 2$,  let 
\begin{align*}
\Delta(k, \eta) \colon [X, \Omega B] \rightarrow H^{n-1}(X; G),
\end{align*}
be the homomorphism given by (cf. \cite[Corollary 1.4]{jt65})
\begin{align*}
\Delta(k, \eta) (f) = f^{\ast}\sigma(k) + \Sigma (f^{\ast}\sigma(x_i))(\eta^{\ast}(y_i)),
\end{align*}
for any $f \in [X, \Omega B]$, where $x_{i}, y_{i} \in H^{\ast}(B; G)$ are the cohomology classes satisfying
\begin{align*}
\mu^{\ast}( k ) = k \times 1 + 1 \times k + \Sigma ~ x_{i} \times y_{i} \in H^{n}(B \times B; G).
\end{align*}
One may find that the degrees of $x_{i}$ and $y_{i}$ are all greater than or equal to $1$.
Now suppose that 
\begin{align*}
k_{\ast} ( \eta ) = 0 \in [X, K(G, n)]
\end{align*}
is the trivial element. 
It follows from the exact sequence above that $q_{\ast}^{-1} ( \eta ) \neq \emptyset$.
Moreover, it follows from \cite[Theorem 1.2 and Corollary 1.4]{jt65} that
\begin{lemma}[James and Thomas]
\label{lem:delta}
There is a one to one correspondence between 
\begin{align*}
q_{\ast}^{-1}(\eta) \quad  \text{and} \quad \mathrm{Cok} ~ \Delta(k, \eta).
\end{align*}
\end{lemma}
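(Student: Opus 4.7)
The plan is to exploit the principal $K(G,n-1)$-bundle structure on $q \colon E \to B$. Because $E$ is the principal fiber space classified by $k$, the fiber $K(G,n-1)$ acts fiberwise on $E$ over $B$, and this induces an action of the abelian group $[X, K(G,n-1)] = H^{n-1}(X;G)$ on $[X,E]$ whose orbits project to single elements of $[X,B]$. Since $k_{\ast}(\eta) = 0$, the exact sequence recalled just above guarantees that $q_{\ast}^{-1}(\eta)$ is nonempty; fix a lift $\tilde\eta_{0}$. The orbit map $f \mapsto \tilde\eta_{0} + f$ then yields a surjection $H^{n-1}(X;G) \twoheadrightarrow q_{\ast}^{-1}(\eta)$, reducing the problem to identifying its kernel, namely the stabilizer of $\tilde\eta_{0}$.

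Next I would analyze this stabilizer. Any $f$ satisfying $\tilde\eta_{0} + f = \tilde\eta_{0}$ corresponds to a null-homotopy of $j_{\ast}(f) \in [X,E]$ taken relative to $\tilde\eta_{0}$. The exact sequence $[X, \Omega B] \xrightarrow{(\Omega k)_{\ast}} [X, K(G,n-1)] \xrightarrow{j_{\ast}} [X,E]$ already gives $\ker j_{\ast} = \mathrm{Im}\,(\Omega k)_{\ast}$, and in the basepoint case $\eta = \ast$ this is the full stabilizer, leading directly to $q_{\ast}^{-1}(\ast) \cong H^{n-1}(X;G)/\mathrm{Im}\,(\Omega k)_{\ast}$. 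For a general $\eta$, however, the lift $\tilde\eta_{0}$ differs from the basepoint lift, and since $q_{\ast}$ is not a homomorphism this basepoint change contributes additional stabilizing elements, controlled by the failure of $k$ to be primitive with respect to the $H$-space multiplication $\mu$.

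The crucial step is therefore to track this correction explicitly. Writing $\mu^{\ast}(k) = k \times 1 + 1 \times k + \sum x_{i} \times y_{i}$, the classes $x_{i} \otimes y_{i}$ measure precisely the non-primitivity of $k$. A cochain-level computation (as in \cite[Theorem 1.2, Corollary 1.4]{jt65}) shows that when one transports the lift $\tilde\eta_{0}$ along a loop representing $f \in [X,\Omega B]$, the resulting difference element of $[X, K(G,n-1)]$ is not $f^{\ast}\sigma(k)$ alone but acquires the extra cross-product terms $\sum f^{\ast}\sigma(x_{i}) \cdot \eta^{\ast}(y_{i})$, which is exactly $\Delta(k,\eta)(f)$. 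Consequently the stabilizer of $\tilde\eta_{0}$ coincides with $\mathrm{Im}\,\Delta(k,\eta)$, and passing to orbits yields $q_{\ast}^{-1}(\eta) \cong \mathrm{Cok}\,\Delta(k,\eta)$. The main technical obstacle lies in this final identification: the algebraic framework of the principal action and the Puppe sequence are formal, but producing the explicit correction terms requires a careful model of the fibration pulled back along $\eta$ and a delicate comparison of the induced coactions of $\mu$ on $E$.
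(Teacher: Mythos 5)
The paper offers no proof of this lemma beyond citing \cite[Theorem 1.2 and Corollary 1.4]{jt65}, and your sketch is a faithful outline of exactly that James--Thomas argument: the principal action of $H^{n-1}(X;G)$ on $[X,E]$ whose orbits are the fibers of $q_{\ast}$, the orbit--stabilizer reduction, and the identification of the stabilizer of a lift with $\mathrm{Im}\,\Delta(k,\eta)$ via the non-primitivity terms of $k$. Since you defer the technical core (the cochain-level computation of the correction terms) to the same reference the paper relies on, the two treatments essentially coincide.
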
 

In particular, as an application, suppose that $\eta = 1 \in [X, B]$ is the trivial element. 
Then we must have $\eta^{\ast} (y_{i}) = 0 \in H^{\ast}(X; G)$ in the definition of $\Delta(k, \eta)$. 
Hence, 
\begin{align*}
\Delta(k, 1) ( f ) = f^{\ast} \sigma (k) = ( \Omega k )_{\ast} ( f )
\end{align*}
by the definitions of $\Delta(k, \eta)$ and the cohomology suspension $\sigma$.
This means that 
\begin{align*}
\Delta(k, 1) = ( \Omega k )_{\ast}.
\end{align*} 
Therefore, it can be deduced easily from Lemma \ref{lem:delta} (or, the exact sequence above) that
\begin{lemma}\label{lem:omegak}
There is a bijection between $\mathrm{Ker} ~ q_{\ast} = q_{\ast}^{-1} ( 1 )$ and $\mathrm{Cok} ~ ( \Omega k )_{\ast}$.
\end{lemma}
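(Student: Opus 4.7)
The statement should fall directly out of the apparatus built up in the preceding paragraph, so my plan is to treat it as a specialization of Lemma \ref{lem:delta} rather than argue from scratch.

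First, I would take $\eta = 1 \in [X, B]$ (the constant map) and verify the hypothesis of Lemma \ref{lem:delta}, namely $k_{\ast}(1) = 0$. This is immediate since $k$ sends the basepoint of $B$ to the basepoint of $K(G, n)$, so the composition $k \circ (\text{const}): X \to K(G, n)$ is nullhomotopic. Consequently $q_{\ast}^{-1}(1) \neq \emptyset$ and Lemma \ref{lem:delta} applies to give a bijection between $\mathrm{Ker}\, q_{\ast} = q_{\ast}^{-1}(1)$ and $\mathrm{Cok}\, \Delta(k, 1)$.

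Next, I would identify $\Delta(k, 1)$ with $(\Omega k)_{\ast}$. Recall the formula
\begin{align*}
\Delta(k, \eta)(f) = f^{\ast}\sigma(k) + \Sigma\, (f^{\ast}\sigma(x_i))(\eta^{\ast}(y_i)),
\end{align*}
where each $y_i$ has degree $\geq 1$. When $\eta = 1$ is the constant map, $\eta^{\ast}$ annihilates every positive-degree cohomology class, so each term in the sum vanishes and $\Delta(k, 1)(f) = f^{\ast}\sigma(k)$. By the very definition of the cohomology suspension, the class $f^{\ast}\sigma(k) \in H^{n-1}(X; G)$ is represented by the composition $\Omega k \circ f \colon X \to \Omega B \to K(G, n-1)$, which is $(\Omega k)_{\ast}(f)$. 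Hence $\Delta(k, 1) = (\Omega k)_{\ast}$ as homomorphisms $[X, \Omega B] \to H^{n-1}(X; G)$.

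Combining these two steps yields the claimed bijection between $\mathrm{Ker}\, q_{\ast}$ and $\mathrm{Cok}\, (\Omega k)_{\ast}$. I do not expect any serious obstacle here: the only thing to be careful about is justifying the vanishing of the correction terms in $\Delta(k, 1)$, which comes down to the positive-degree condition on the $y_i$ noted in the setup. As an alternative sanity check, one can read the same conclusion directly off the exact sequence displayed before the statement, since by exactness $\mathrm{Ker}\, q_{\ast} = \mathrm{Im}\, j_{\ast}$ and the preimages of $j_{\ast}$ are precisely the cosets of $\mathrm{Im}\, (\Omega k)_{\ast}$.
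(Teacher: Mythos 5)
Your argument is exactly the paper's: specialize Lemma \ref{lem:delta} to $\eta = 1$, observe that the correction terms vanish because $\eta^{\ast}$ kills positive-degree classes, and identify $\Delta(k,1)$ with $(\Omega k)_{\ast}$ via the definition of the cohomology suspension. The proposal is correct and matches the paper's proof, including the remark that the exact sequence gives the same conclusion directly.
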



\section{ Proof of Theorem \ref{thm:main4} }
\label{s:main4}

In order to prove Theorem \ref{thm:main4}, we need the following notation.

Denote by $BU(n)$ and $BU$ the classifying space of the unitary group $U(n)$ and the stable unitary group $U$ respectively, 
and let 
\begin{align*}
\pi \colon BU(n) \rightarrow BU
\end{align*}
be the inclusion. 
For any $CW$-complex $X$, 
it is known that there is a bijection between
\begin{align*}
\mathrm{Vect}_{\C}^{n}(X) \quad \text{and} \quad  [X, BU(n)] 
\end{align*}
in the natural way.

Recall that $M$ is an $8$-dimensional spin$^{c}$ manifold.
Since
$\pi_{\ast} \colon \pi_{r}(BU(4)) \rightarrow \pi_{r}(BU)$
is isomorphic for $r \le 8$, it follows from \cite[Lemma 4.1]{jt65} that the induced function
\begin{align*}
\pi_{\ast} \colon [M, BU(4)] \rightarrow [M, BU]
\end{align*}
is bijective.
Therefore, to prove Theorem \ref{thm:main4}, it is convenient for us to identify
\begin{align*}
\mathrm{Vect}_{\C}^{4}(M) = [M, BU]
\end{align*}
by the bijections above, and thus we will regard rank $4$ complex vector bundles over $M$ as stable complex vector bundles.

Let us first prove Theorem \ref{thm:main4} $(A)$. 
We need the following lemmas.

Recall that $c \in H^{2}(M; \Z)$ is a fixed spin$^c$ characteristic class of $M$, i.e., 
it satisfies
\begin{align*}
\rho_{2}(c) = w_{2}(M).
\end{align*}

\begin{lemma} 
\label{lem:c4}
For any stable complex vector bundle $\eta$ over $M$, we must have
\begin{align*}
\langle  c_{4}(\eta), [M] \rangle   \equiv & ~ \langle p_{1}(M) c_{2}(\eta) - c_{1}^{2}(\eta) c_{2}(\eta) + c_{1}(\eta) c_{3}(\eta) - c_{2}^{2}(\eta), [M]\rangle \mod 3,  \\
\langle  c_{4}(\eta), [M] \rangle   \equiv & ~ \langle - c_{1}^{2}(\eta) c_{2}(\eta) + c_{1}(\eta) c_{3}(\eta), [M] \rangle  \\
& + \frac{1}{4}  \left\langle   2 c_{2}^{2}(\eta) + p_{1}(M) c_{2}(\eta) - 3 c^{2} c_{2}(\eta) , [M] \right\rangle   \\
& + \frac{1}{2} \langle   c [ c_{1}(\eta) c_{2}(\eta) - c_{3}(\eta) ] , [M] \rangle \mod 2. 
\end{align*}
\end{lemma}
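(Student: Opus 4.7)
The strategy is to invoke the Atiyah--Singer index theorem for the spin$^{c}$ Dirac operator twisted by various complex vector bundles over $M$. Since $M$ is a closed spin$^{c}$ $8$-manifold with spin$^{c}$ characteristic class $c$, for every complex vector bundle $\xi$ over $M$ the characteristic number $\langle e^{c/2}\hat{A}(M)\,\mathrm{ch}(\xi),[M]\rangle$ is an integer, where $\hat{A}(M)=1-p_{1}(M)/24+(7p_{1}^{2}(M)-4p_{2}(M))/5760$ through degree $8$. I will apply this integrality to three bundles: the trivial line bundle $\epsilon$, a line bundle $L$ with $c_{1}(L)=c_{1}(\eta)$ (which exists since $H^{2}(M;\Z)$ classifies complex line bundles), and $\eta$ itself, regarded as a rank-$4$ representative of its stable class via the identification used in the proof of Theorem \ref{thm:main4}.

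The point of the combination $\mathrm{ind}(D_{\eta})-4\,\mathrm{ind}(D_{\epsilon})-[\mathrm{ind}(D_{L})-\mathrm{ind}(D_{\epsilon})]$ is that it cancels both the purely tangential terms (involving only $c$ and $p_{i}(M)$) that come from the rank contribution, and the polynomial-in-$c_{1}(\eta)$ terms that come from $\mathrm{ch}(L)=e^{c_{1}(\eta)}$. What remains, after a direct expansion of the degree-$8$ component of $e^{c/2}\hat{A}(M)\,\mathrm{ch}(\xi)$ for each $\xi$ and of $\mathrm{ch}_{k}(\eta)$ in terms of Chern classes, is the single integrality $\langle K(\eta),[M]\rangle\in\Z$ with
\[
K(\eta)=-\frac{c_{4}}{6}+\frac{c_{1}c_{3}}{6}-\frac{c_{1}^{2}c_{2}}{6}+\frac{c_{2}^{2}}{12}-\frac{c\,c_{1}c_{2}}{4}+\frac{c\,c_{3}}{4}-\frac{c^{2}c_{2}}{8}+\frac{p_{1}c_{2}}{24}.
\]
Multiplying through by $24$ converts this into one mod-$24$ congruence, paired with $[M]$:
\[
4c_{4}\equiv -4c_{1}^{2}c_{2}+4c_{1}c_{3}+2c_{2}^{2}-6c\,c_{1}c_{2}+6c\,c_{3}-3c^{2}c_{2}+p_{1}c_{2}\pmod{24}.
\]

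Reducing this congruence modulo $3$, where the coefficients $6$ and $3$ drop and $2c_{2}^{2}\equiv-c_{2}^{2}$, yields exactly the mod $3$ condition of the lemma. Reducing modulo $8$ and dividing by $4$ (which is legitimate because the right-hand side is divisible by $24$, hence by $4$), combined with the elementary fact that $\pm 3n/2\equiv\mp n/2\pmod 2$ for any integer $n$, converts the term $(-6c\,c_{1}c_{2}+6c\,c_{3})/4=(-3c\,c_{1}c_{2}+3c\,c_{3})/2$ into $(c\,c_{1}c_{2}-c\,c_{3})/2$ modulo $2$, and the remaining terms line up to give the mod $2$ condition in the precise form stated.

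The main technical obstacle is the careful bookkeeping of the degree-$8$ component of $e^{c/2}\hat{A}(M)\,\mathrm{ch}(\xi)$ for the three choices of $\xi$ and the verification that the chosen linear combination cancels all higher powers of $c$ and all terms depending only on $c_{1}(\eta)$, leaving precisely $K(\eta)$. Once the expansion is done, both congruences of the lemma fall out of one integrality statement; as a byproduct, the integrality $\langle K(\eta),[M]\rangle\in\Z$ automatically ensures that the rational expression in condition (3) of Theorem \ref{thm:main4} evaluates to an integer on $[M]$, which is the content of part $(a)$ of the subsequent remark.
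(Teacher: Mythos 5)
Your proposal is correct and follows essentially the same route as the paper: the paper invokes the Atiyah--Hirzebruch differential Riemann--Roch theorem to assert that $\langle \hat{\mathfrak{A}}(M)e^{c/2}[ch(\eta)-ch(l_\eta)-\dim_{\C}\eta+1],[M]\rangle$ is an integer, which is exactly your index-theoretic combination $\mathrm{ind}(D_\eta)-\mathrm{ind}(D_L)-3\,\mathrm{ind}(D_\epsilon)$, and your expanded class $K(\eta)$ agrees term by term with the paper's expansion. The only cosmetic difference is that you package the result as one mod $24$ congruence and then reduce, while the paper multiplies the integer by $6$ and $24$ separately to extract the mod $2$ and mod $3$ statements.
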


\begin{proof}
Denote by $\hat{\mathfrak{A}}(M)$ the $\mathfrak{A}$-class of $M$ (see for instance \cite[p. 278]{ah59}).
For a complex vector bundle $\eta$ over $M$, let $ch(\eta)$ be the Chern character of $\eta$ and $l_{\eta}$ be the complex line bundle over $M$ with $c_{1}( l_{\eta} ) = c_{1}(\eta)$.
The differential Riemann-Roch Theorem (cf. Atiyah and Hirzebruch \cite[Corollary 1]{ah59}) tells us that the rational number
\begin{align*}
\langle \hat{ \mathfrak{A} }(M) \cdot e^{ \frac{c}{2} } \cdot [ ch( \eta ) - ch(l_{\eta})  - \dim_{\C} \eta + 1 ], [M] \rangle
\end{align*}
is an integer.
Thus,
\begin{align*}
6 ~ \langle \hat{ \mathfrak{A} }(M) \cdot e^{ \frac{c}{2} } \cdot [ ch( \eta ) - ch(l_{\eta})  - \dim_{\C} \eta + 1 ], [M] \rangle \equiv 0 \mod 2, \\
24 ~ \langle \hat{ \mathfrak{A} }(M) \cdot e^{ \frac{c}{2} } \cdot [ ch( \eta ) - ch(l_{\eta})  - \dim_{\C} \eta + 1 ], [M] \rangle \equiv 0 \mod 3.
\end{align*}
Note that we have
\begin{align*}
\hat { \mathfrak{A} } (M) = & ~ 1 - \frac{ p_{1}(M) } { 24 } + \frac{ -4 p_2(M) + 7 p_1^2(M) }{5760}, \\
ch(\eta) = & ~ \dim_{\C} \eta + c_{1}(\eta) + \frac{ c_{1}^{2}(\eta) - 2 c_{2}(\eta) }{ 2 } + \frac{ c_{1}^{3}(\eta) - 3 c_{1}(\eta) c_{2}(\eta) + 3 c_{3}(\eta) }{ 6 } \\
& + \frac{ c_{1}^{4}(\eta) - 4 c_{1}^{2}(\eta) c_{2}(\eta) + 2 c_{2}^{2}(\eta) + 4 c_{1}(\eta) c_{3}(\eta) - 4 c_{4}(\eta) }{ 24 },
\end{align*}
by definition.
Hence, 
\begin{align*}
& ~ \langle \hat{ \mathfrak{A} }(M) \cdot e^{ \frac{c}{2} } \cdot [ ch( \eta ) - ch(l_{\eta})  - \dim_{\C} \eta + 1 ], [M] \rangle \\
= & ~ \frac{1}{6} \langle  - c_{1}^{2}(\eta) c_{2}(\eta) + c_{1}(\eta) c_{3}(\eta) - c_{4}(\eta), [M] \rangle 
     + \frac{1}{4} \langle  c [ c_{3}(\eta) - c_{1}(\eta) c_{2}(\eta) ], [M] \rangle \\
  &  + \frac{1}{24} \langle  2 c_{2}^{2}(\eta) - [ 3 c^{2} - p_{1}(M) ] c_{2}(\eta), [M] \rangle.
\end{align*}
Therefore, the facts of Lemma \ref{lem:c4} are followed by substituting this identity into the congruences above.
\end{proof}

Denote by $M^{\circ} := M - int(D^{8})$ the space obtained from $M$ by removing the interior of a small $8$-disc in $M$.
Let $p \colon M \rightarrow S^{8}$ be the map by collapsing $M ^{\circ}$ to the basepoint,
and  $i \colon M^{\circ} \to M$ be the inclusion map.

\begin{lemma}\label{lem:c46}
For any cohomology class $w \in H^{8}(M; \Z)$ with 
\begin{align*}
\langle w, [M] \rangle \equiv 0 \mod 6,
\end{align*} 
there exists a stable complex vector bundles $\xi^{\prime}$ over $S^{8}$, such that 
$\xi = p^{\ast} ( \xi^{\prime} ) $ is a stable complex vector bundle over $M$ satisfying $c_{1}(\xi) = c_{2}(\xi) = c_{3}(\xi) = 0$, and
\begin{align*}
c_{4}(\xi) = w.
\end{align*}
\end{lemma}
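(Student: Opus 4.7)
The plan is to realize $\xi$ as the pullback under $p$ of a stable bundle over $S^{8}$, and the core ingredient is the classical fact that the image of $c_{4} \colon \widetilde{K}(S^{8}) \to H^{8}(S^{8}; \Z) \cong \Z$ is exactly $6\Z$.

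First, I would observe that $p \colon M \to S^{8}$ restricts to a homeomorphism from $\mathrm{int}(D^{8})$ onto $S^{8} \setminus \{\ast\}$, so it has degree one with respect to the chosen orientations. Consequently the induced map $p^{\ast} \colon H^{8}(S^{8}; \Z) \to H^{8}(M; \Z)$ is an isomorphism, and $\langle p^{\ast}(w^{\prime}), [M] \rangle = \langle w^{\prime}, [S^{8}] \rangle$ for every $w^{\prime} \in H^{8}(S^{8}; \Z)$. In particular, the hypothesis $\langle w, [M] \rangle \equiv 0 \bmod 6$ translates into the analogous divisibility for the pre-image $w^{\prime} := (p^{\ast})^{-1}(w)$.

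Next I would analyze stable bundles over $S^{8}$. Since $H^{2i}(S^{8}; \Z) = 0$ for $1 \le i \le 3$, every stable bundle $\xi^{\prime}$ over $S^{8}$ automatically has $c_{i}(\xi^{\prime}) = 0$ for $i = 1, 2, 3$. Identifying $[S^{8}, BU] = \widetilde{K}(S^{8}) \cong \Z$ via Bott periodicity, I would determine the image of $c_{4}$ via the Chern character: when $c_{1} = c_{2} = c_{3} = 0$, Newton's identity yields $s_{4} = -4 c_{4}$, so $ch_{4}(\xi^{\prime}) = - c_{4}(\xi^{\prime})/6$. Since the Chern character of the Bott generator of $\widetilde{K}(S^{8})$ is a generator of $H^{8}(S^{8}; \Z)$, it follows that $c_{4}$ of the Bott generator equals $\pm 6$ times a generator of $H^{8}(S^{8}; \Z)$, so the image of $c_{4}$ on $\widetilde{K}(S^{8})$ is precisely $6\Z$.

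Combining these two steps: since $\langle w^{\prime}, [S^{8}] \rangle \equiv 0 \bmod 6$, there is a stable bundle $\xi^{\prime}$ over $S^{8}$ with $c_{4}(\xi^{\prime}) = w^{\prime}$ and trivial lower Chern classes; then $\xi := p^{\ast}(\xi^{\prime})$ has $c_{i}(\xi) = p^{\ast}(c_{i}(\xi^{\prime})) = 0$ for $1 \le i \le 3$ and $c_{4}(\xi) = p^{\ast}(w^{\prime}) = w$. The only substantive point is the divisibility by $(n-1)! = 6$ of the top Chern class for stable bundles over $S^{2n}$, which is Bott's integrality theorem; everything else is bookkeeping with the degree-one map $p$.
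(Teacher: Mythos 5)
Your proposal is correct and follows essentially the same route as the paper: pull the class back along the degree-one collapse map $p \colon M \to S^{8}$ and realize the resulting multiple of $6$ as $c_{4}$ of a stable bundle over $S^{8}$. The only difference is that the paper cites Peterson's theorem for the fact that the image of $c_{4}$ on $\widetilde{K}(S^{8})$ is $6\Z$, whereas you rederive it from Newton's identity $s_{4} = -4c_{4}$ and Bott integrality, which is a perfectly adequate substitute.
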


\begin{proof}
Since the degree of $p$ is one, for any $w \in H^{8}(M; \Z)$ with $\langle w, [M] \rangle \equiv 0 \bmod 6$, there exists a class $w^{\prime} \in H^{8}(S^{8}; \Z)$ such that $p^{\ast} ( w^{\prime} ) = w$, and
\begin{align*}
\langle w^{\prime}, [S^{8}] \rangle = \langle w, [M] \rangle \equiv 0 \mod 6.
\end{align*}
Therefore, it follows from Peterson \cite[Theorem 5.1]{pe59} that there must exist a stable complex vector bundle $\xi^{\prime}$ over $S^{8}$ such that $c_4(\xi^{\prime}) = w^{\prime}$.
Moreover, $\xi = p^{\ast} ( \xi^{\prime} )$ is a stable complex vector bundle over $M$ satisfying $c_{1}(\xi) = c_{2}(\xi) = c_{3}(\xi) = 0$, and
$c_{4}(\xi) = w$.
\end{proof}

\begin{proof}[Proof of Theorem \ref{thm:main4} $(A)$]
Suppose that $(u_1, u_2, u_3, u_4) \in \mathrm{Im} ~ \mathcal{C}_4$, 
i.e., there exists $\eta \in \mathrm{Vect}^{4}_{\C}(M)$ such that $u_1 = c_{1}(\eta), u_2 = c_{2}(\eta), u_3 = c_{3}(\eta)$ and $u_4 = c_{4}(\eta)$.
Then the condition $(1)$ in Theorem \ref{thm:main4} is obtained by Wu's explicit formula 
\begin{align*}
\mathrm{Sq}^{2}w_{4} = w_{2} w_{4} + w_{6}
\end{align*}
referring to the universal Stiefel-Whitney classes (see for instance, Milnor and Stasheff \cite[p. 94, Problem 8-A]{ms74}), 
and the conditions $(2)$ and $(3)$ are got by Lemma \ref{lem:c4}.

Conversely, suppose that the cohomology classes $u_{i} \in H^{2i}(M; \Z)$, $1 \le i \le 4$ satisfy the conditions $(1)$-$(3)$ in Theorem \ref{thm:main4}.
Consider the map 
\begin{align*}
\mathcal{C} = (c_{1}, c_{2}, c_{3}) \colon BU \rightarrow K(\Z, 2) \times K(\Z, 4) \times K(\Z, 6) 
\end{align*}
given be the universal Chern classes $c_{1}$, $c_{2}$ and $c_{3}$.
Let $F$ be the homotopy fiber of $\mathcal{C}$. 
It can be deduced easily from the homotopy sequence of this fiber space that 
\begin{align*}\pi_{i}(F)\cong
  \begin{cases}
    0, & i \le 7, i \neq 5; \\
    \mathbb{Z}/2, &  i=5; \\
    \pi_{i}(BU), &  i \ge 8.
  \end{cases}
\end{align*}
Therefore, the Postnikov resolution of the map $\mathcal{C}$ through dimension $9$ can be shown as : 
\begin{equation*}
\begin{split}
\xymatrix{
                 & &       E  \ar[d]^{q}  \ar[r] & K(\Z, 9)  \\
  BU  \ar[urr]^{h} \ar[rr]_-{\mathcal{C}}  & & K \ar[r]^-{k} & K(\mathbb{Z}/2, 6),         }
\end{split}
\end{equation*}
Where 
\begin{align*}
K = K(\Z, 2) \times K(\Z, 4) \times K(\Z, 6)
\end{align*}
and $q$ is a principal fibration with fiber $K(\Z/2, 5)$ and $k$ as classifying map.
We will denote also by 
\begin{align*}
k \in H^{6}(K; \Z/2)
\end{align*}
the class represented by the map $k$.

Since $H^{6}(K; \Z/2)$, 
as a vector space over $\Z/2$, 
is generated by
$\rho_{2} l_{6}$, $\mathrm{Sq}^{2} \rho_{2} l_{4}$,  $\rho_{2} l_{2} \otimes \rho_{2} l_{4}$ and  $\rho_{2} l_{2}^{3}$,
where $l_{n} \in H^{n}( K(\Z, n); \Z )$ is the fundamental class, Wu's explicit formula above implies that
\begin{align*}
k = \mathrm{Sq}^{2} \rho_{2} l_{4} + \rho_{2} l_{6} + \rho_{2} l_{2} \otimes \rho_{2} l_{4}.
\end{align*}
Therefore, the condition $(1)$ in Theorem \ref{thm:main4} 
yields that there exists a complex vector bundle $\eta^{\prime}$ over $M$ such that $c_{1}(\eta^{\prime}) = u_{1}$, $c_{2}(\eta^{\prime}) = u_{2}$ and $c_{3}(\eta^{\prime}) = u_{3}$.
Moreover, it follows from Lemma \ref{lem:c4} that 
\begin{align*}
& \langle c_{4}(\eta^{\prime}), [M] \rangle \equiv \langle p_{1}(M) u_2 - u_{1}^{2} u_{2} + u_{1} u_{3} - u_{2}^{2}, [M]\rangle  \mod 3, \\
& \langle c_{4}(\eta^{\prime}), [M] \rangle \equiv \langle - u_{1}^{2} u_{2} + u_{1} u_{3} + \frac{1}{4} [2 u_{2}^{2} + p_{1}(M) u_{2} - 3 c^{2} u_{2} ] + \frac{1}{2} c ( u_{1} u_{2} - u_{3} ), [M] \rangle \mod 2.
\end{align*}
Thus, combining these congruences with the conditions $(2)$ and $(3)$ in Theorem \ref{thm:main4}, we get that 
\begin{align*}
& \langle u_{4} - c_{4}(\eta^{\prime}), [M] \rangle \equiv 0 \mod 3, \\
& \langle u_{4} - c_{4}(\eta^{\prime}), [M] \rangle \equiv  0 \mod 2.
\end{align*}
That is
\begin{align*}
\langle u_{4} - c_{4}(\eta^{\prime}), [M] \rangle \equiv 0 \mod 6.
\end{align*}
Hence, by Lemma \ref{lem:c4}, there exists a complex vector bundle $\xi$ over $M$ such that 
\begin{align*}
c_{1}(\xi) = c_{2}(\xi) = c_{3}(\xi) = 0 \quad \text{and} \quad c_{4}(\xi) = u_{4} - c_{4}(\eta^{\prime}).
\end{align*}
Let 
\begin{align*}
\eta = \eta^{\prime} + \xi
\end{align*}
be the Whitney sum of $\eta^{\prime}$ and $\xi$. 
Then  $c_{1}(\eta) = u_{1}$,  $c_{2}(\eta) = u_{2}$, $c_{3}(\eta) = u_{3}$, and 
\begin{align*}
c_{4}(\eta) = u_{4}(\eta^{\prime}) + c_{4}(\xi) = u_{4}.
\end{align*}
It follows that $(u_{1}, u_{2}, u_{3}, u_{4}) \in \mathrm{Im} ~ \mathcal{C}_{4}$ and the proof is complete.
\end{proof}

The remainder of this section will be devoted to the proof of Theorem \ref{thm:main4} $(B)$.

We divide the proof into the following lemmas.

For any $CW$-complex $X$, denote by $V_{0}(X)$ the set of stable isomorphic classes of complex vector bundles over $X$ with trivial Chern classes.
Obviously, $V_{0}(X)$ is a subgroup of $[X, BU]$.

\begin{lemma}
For any $(u_1, u_2, u_3, u_4) \in \mathrm{Im} ~ \mathcal{C}_{4}$, there is a bijection between 
\begin{align*}
\mathcal{C}_{4}^{-1} (u_1, u_2, u_3, u_4) \quad  \text{and} \quad \mathrm{V}_{0}(M).
\end{align*}
Therefore, there is a one to one correspondence between $[M, BU]$ (hence $\mathrm{Vect}_{\C}^{4}(M)$) and 
\begin{align*}
\mathrm{V}_{0}(M) \times \mathrm{Im} ~ \mathcal{C}_{4}.
\end{align*}
\end{lemma}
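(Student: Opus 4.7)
The plan is to exploit the abelian group structure on $[M,BU]$ induced by Whitney sum of stable complex vector bundles, under which $V_{0}(M)$ is by definition a subgroup. The key observation is that the total Chern class
\begin{align*}
c \colon [M,BU] \longrightarrow \bigl( 1 + H^{>0}(M;\Z),\, \cdot \bigr)
\end{align*}
is a group homomorphism into the multiplicative group of cohomology classes with constant term $1$, by virtue of the Whitney product formula $c(\eta \oplus \xi) = c(\eta)\,c(\xi)$. Since $\dim M = 8$, one has $c(\eta) = 1 + c_{1}(\eta) + c_{2}(\eta) + c_{3}(\eta) + c_{4}(\eta)$, so $c(\eta)$ is encoded by $\mathcal{C}_{4}(\eta)$ and, conversely, two bundles have the same $\mathcal{C}_{4}$ if and only if they have the same total Chern class. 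The kernel of $c$ is then precisely $V_{0}(M)$.

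Next, I would show that translation by any $\xi \in V_{0}(M)$ preserves the fibres of $\mathcal{C}_{4}$: since $c(\xi) = 1$, the Whitney formula gives $c(\eta + \xi) = c(\eta)$, hence $\mathcal{C}_{4}(\eta + \xi) = \mathcal{C}_{4}(\eta)$. The resulting action of $V_{0}(M)$ on each nonempty fibre $\mathcal{C}_{4}^{-1}(u_{1},u_{2},u_{3},u_{4})$ is free (translation on an abelian group is automatically free) and transitive: given $\eta,\eta'$ in the fibre, the difference $\xi := \eta' - \eta$ satisfies $c(\xi) = c(\eta')/c(\eta) = 1$, so $\xi \in V_{0}(M)$ and $\eta' = \eta + \xi$. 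Fixing any base point $\eta_{0}$ in the fibre (which exists since $(u_{1},u_{2},u_{3},u_{4}) \in \mathrm{Im}\,\mathcal{C}_{4}$), the map $\xi \mapsto \eta_{0} + \xi$ is the desired bijection with $V_{0}(M)$.

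For the second assertion, I would choose any set-theoretic section $s \colon \mathrm{Im}\,\mathcal{C}_{4} \to [M,BU]$ of $\mathcal{C}_{4}$; combined with the fibrewise bijection of the previous step, the assignment $(\xi,\,u) \mapsto \xi + s(u)$ yields the claimed one-to-one correspondence between $V_{0}(M) \times \mathrm{Im}\,\mathcal{C}_{4}$ and $[M,BU]$. I do not anticipate any genuine obstacle here: once one recognizes that $[M,BU]$ carries the abelian group structure coming from stable Whitney sum and that the total Chern class is multiplicative, everything follows formally from the identification $V_{0}(M) = \ker c$.
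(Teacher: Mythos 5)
Your proposal is correct and follows essentially the same route as the paper: the paper fixes a base point $\eta_{u}$ in each fibre and declares $\Psi(\eta)=\eta-\eta_{u}$ to be ``clearly'' a bijection onto $V_{0}(M)$, which is exactly the translation argument you carry out. Your write-up merely makes explicit the justification the paper omits, namely that the Whitney product formula makes the total Chern class a homomorphism with kernel $V_{0}(M)$, so that the fibres of $\mathcal{C}_{4}$ are precisely the cosets of $V_{0}(M)$ in $[M,BU]$.
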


\begin{proof}
For any $u = (u_1, u_2, u_3, u_4) \in \mathrm{Im} ~ \mathcal{C}_{4}$, let $\eta_{u}$ be a fixed stable complex vector bundle over $M$ such that
\begin{align*}
\mathcal{C}_{4} ( \eta_{u} ) = u = (u_1, u_2, u_3, u_4).
\end{align*}
Let
\begin{align*}
\Psi  \colon \mathcal{C}_{4}^{-1} (u_1, u_2, u_3, u_4) \to \mathrm{V}_{0}(M)
\end{align*}
be the map given by $\Psi ( \eta ) = \eta - \eta_{u}$. 
Clearly,  $\Psi$ is bijective.
Note that $\mathrm{V}_{0}(M)$ is not depend on $u = (u_1, u_2, u_3, u_4)$. 
It follows that there is a one to one correspondence between $[M, BU]$ and 
\begin{align*}
\mathrm{V}_{0}(M) \times \mathrm{Im} ~ \mathcal{C}_{4}.
\end{align*}
\end{proof}

\begin{lemma}
The induced homomorphism $i^{\ast} \colon V_{0}(M) \to V_{0}(M^{\circ})$ is bijective.
\end{lemma}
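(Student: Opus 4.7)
The plan is to deduce both injectivity and surjectivity from the cofibration $M^{\circ} \xrightarrow{i} M \xrightarrow{p} M/M^{\circ} \simeq S^{8}$, whose associated Puppe sequence yields the exact sequence of abelian groups
\begin{align*}
\widetilde{K}(S^{8}) \xrightarrow{p^{\ast}} [M, BU] \xrightarrow{i^{\ast}} [M^{\circ}, BU].
\end{align*}
Naturality of Chern classes immediately ensures that $i^{\ast}$ carries $V_{0}(M)$ into $V_{0}(M^{\circ})$, so the restricted map is at least a well-defined homomorphism.

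For injectivity, suppose $\eta_{1}, \eta_{2} \in V_{0}(M)$ satisfy $i^{\ast}\eta_{1} = i^{\ast}\eta_{2}$. By exactness there is $\alpha \in \widetilde{K}(S^{8}) \cong \Z$ with $\eta_{1} - \eta_{2} = p^{\ast}\alpha$. The group $\widetilde{K}(S^{8})$ is detected by the top Chern class (by Newton's identity, the Bott generator has $c_{4}$ equal to $\pm 6$ times a generator of $H^{8}(S^{8}; \Z)$), and $p^{\ast} \colon H^{8}(S^{8}; \Z) \to H^{8}(M; \Z)$ is injective because $p$ has degree one. Combining these with $c_{4}(\eta_{1}) = c_{4}(\eta_{2}) = 0$ gives $c_{4}(\alpha) = 0$, hence $\alpha = 0$ and $\eta_{1} = \eta_{2}$.

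For surjectivity, given $\zeta \in V_{0}(M^{\circ})$, I would first extend $\zeta$ to a map $\eta \colon M \to BU$. Since $M$ is obtained from $M^{\circ}$ by attaching a single $8$-cell, the only obstruction lives in $H^{8}(M, M^{\circ}; \pi_{7}(BU)) = 0$, because $\pi_{7}(BU) = 0$; so an extension exists. The long exact sequence of the pair together with $H^{2i}(M, M^{\circ}; \Z) \cong H^{2i}(S^{8}; \Z) = 0$ for $i = 1, 2, 3$ shows $H^{2i}(M) \to H^{2i}(M^{\circ})$ is injective in those degrees, forcing $c_{1}(\eta) = c_{2}(\eta) = c_{3}(\eta) = 0$. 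Only $c_{4}(\eta)$ may fail to vanish.

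To kill it, invoke Lemma \ref{lem:c4} with $u_{1} = u_{2} = u_{3} = 0$: every term on each right-hand side collapses, yielding $\langle c_{4}(\eta), [M] \rangle \equiv 0 \bmod 6$. Lemma \ref{lem:c46} then furnishes $\xi = p^{\ast}\xi'$ with $c_{1}(\xi) = c_{2}(\xi) = c_{3}(\xi) = 0$ and $c_{4}(\xi) = c_{4}(\eta)$. Since $p \circ i$ is constant, $i^{\ast}\xi = 0$, and the Whitney formula (all cross terms live in $H^{>8}(M) = 0$) shows $\eta - \xi \in V_{0}(M)$, so $i^{\ast}(\eta - \xi) = \zeta$. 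The main delicacy is precisely the mod-$6$ divisibility required by Lemma \ref{lem:c46}, which is exactly what the integrality part of Lemma \ref{lem:c4} delivers once the lower Chern classes are trivialized.
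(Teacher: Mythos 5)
Your proof is correct and follows essentially the same route as the paper: injectivity via the Puppe sequence $[S^{8},BU]\to[M,BU]\to[M^{\circ},BU]$ together with the fact that $\widetilde{K}(S^{8})$ is detected by $c_{4}$, and surjectivity by extending over the top cell (the paper phrases this as $[S^{7},BU]=0$), observing that the lower Chern classes of the extension vanish, and then correcting $c_{4}$ using the mod~$6$ integrality from Lemma~\ref{lem:c4} and the bundle supplied by Lemma~\ref{lem:c46}. The only cosmetic difference is that you invoke Lemma~\ref{lem:c4} directly where the paper cites Theorem~\ref{thm:main4}~$(A)$, which rests on the same lemma.
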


\begin{proof}
Consider the following commutative diagram:
\begin{equation*}
\begin{split}
\xymatrix{
                & V_{0}(M)  \ar@{_{(}->}[d]^{} \ar[r]^{i^{\ast}} &   V_{0}(M^{\circ}) \ar@{_{(}->}[d]^{}      \\
  [S^{8}, BU]  \ar[r]^{p^{\ast}} & [M, BU] \ar[r]^{i^{\ast}} & [M^{\circ}, BU] \ar[r] & [S^{7}, BU],          }
\end{split}
\end{equation*}
where $p^{\ast}$ and $i^{\ast}$ are the induced homomorphisms, and the bottom sequence is exact. 

For any $\eta \in V_{0}(M)$ with $i^{\ast}(\eta) = 0$, there exists $\xi \in [S^{8}, BU]$ such that $p^{\ast}(\xi) = \eta$. 
Since the degree of $p$ is one, 
it follows from $c_{4}(\eta) = 0$ that we must have $c_{4}(\xi) = 0$.
Hence $\xi$ is trivial by Peterson \cite[Theorem 3.2]{pe59}, and so is $\eta$. 
Thus $i^{\ast} \colon V_{0}(M) \to V_{0}(M^{\circ})$ is injective.

Note that we have $[S^{7}, BU] = 0$. 
Therefore, by the exactness of the bottom sequence, the induced homomorphism 
\begin{align*}
i^{\ast} \colon [M, BU] \to [M^{\circ}, BU]
\end{align*}
is surjective.
Hence, 
for any $\eta_{1} \in V_{0}(M^{\circ})$, there exists $\eta_{2} \in [M, BU]$ such that $i^{\ast}(\eta_{2}) = \eta_{1}$. 
Moreover, we have 
\begin{align*}
c_{1}(\eta_{2}) = c_{2}(\eta_{2}) = c_{3}(\eta_{2}) = 0,
\end{align*}
because the induced homomorphisms $i^{\ast} \colon H^{q}(M; \Z) \to H^{q}(M^{\circ}; \Z)$ are isomorphic for $q \le 7$.
Now Theorem \ref{thm:main4} $(A)$
yields that 
\begin{align*}
\langle c_{4}(\eta_{2}), [M] \rangle \equiv 0 \bmod 6.
\end{align*} 
Then there exists $\xi \in [M, BU]$ such that $\xi = p^{\ast} (\xi^{\prime})$ for some $\xi^{\prime} \in [S^{8}, BU]$, and
\begin{align*}
c_{1}(\xi) = c_{2}(\xi) = c_{3}(\xi) = 0,  \quad c_{4}(\xi) = - c_{4}(\eta_{2})
\end{align*} 
by Lemma \ref{lem:c46}.
Set 
\begin{align*}
\eta = \eta_{2} + \xi = \eta_{2} + p^{\ast}( \xi^{\prime} ).
\end{align*} 
It follows that $\eta \in V_{0}(M)$, and 
\begin{align*}
i^{\ast} (\eta) = i^{\ast} ( \eta_{2}) + i^{\ast} p^{\ast} ( \xi^{\prime} ) = \eta_{1},
\end{align*}
which implies that $i^{\ast} \colon V_{0}(M) \to V_{0}(M^{\circ}) $ is surjective, and the proof is complete.
\end{proof}

\begin{lemma}
There is a one to one correspondence between $V_{0}(M^{\circ})$ and $\mathfrak{B} = \frac{\beta H^{5}(M; \Z/2)}{\beta \mathrm{Sq}^{2} \rho_{2} H^{3}(M; \Z)}$.
\end{lemma}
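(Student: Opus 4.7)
The plan is to combine the Postnikov-tower setup introduced just before the lemma with the James--Thomas Lemma \ref{lem:omegak} and a Bockstein manipulation. Since $M^\circ$ has the homotopy type of a $7$-dimensional $CW$-complex (it is $M$ with the interior of a small $8$-disc removed), and the map $h\colon BU\to E$ is an $8$-equivalence (because $\pi_i(F)=0$ for $i\le 7,\ i\ne 5$), obstruction theory gives a bijection $h_{\ast}\colon [M^\circ,BU]\xrightarrow{\cong}[M^\circ,E]$. Since $H^8(M^\circ;\Z)=0$, the subset $V_0(M^\circ)\subset [M^\circ,BU]$ consists exactly of classes with $c_1=c_2=c_3=0$, and under $h_{\ast}$ these correspond to $q_{\ast}^{-1}(1)=\mathrm{Ker}~q_{\ast}\subset [M^\circ,E]$.

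Next I would apply Lemma \ref{lem:omegak} to the principal fibration $q\colon E\to K$ with classifying map $k$, giving a bijection $V_0(M^\circ)\cong \mathrm{Cok}~(\Omega k)_{\ast}$, where
\begin{align*}
(\Omega k)_{\ast}\colon [M^\circ,\Omega K]\longrightarrow H^5(M^\circ;\Z/2)
\end{align*}
is the induced homomorphism. Using $\Omega K=K(\Z,1)\times K(\Z,3)\times K(\Z,5)$ together with the formula $k=\mathrm{Sq}^2\rho_2 l_4+\rho_2 l_6+\rho_2 l_2\otimes\rho_2 l_4$, I would compute the cohomology suspension. Since $\sigma$ commutes with the stable operations $\rho_2$ and $\mathrm{Sq}^2$ and annihilates products of positive-degree classes, I obtain $\sigma(k)=\rho_2\iota_5+\mathrm{Sq}^2\rho_2\iota_3$. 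Hence, writing a class in $[M^\circ,\Omega K]$ as a triple $(a_1,a_3,a_5)$ with $a_i\in H^i(M^\circ;\Z)$, one has $(\Omega k)_{\ast}(a_1,a_3,a_5)=\rho_2 a_5+\mathrm{Sq}^2\rho_2 a_3$, so that
\begin{align*}
\mathrm{Cok}~(\Omega k)_{\ast}=\frac{H^5(M^\circ;\Z/2)}{\rho_2 H^5(M^\circ;\Z)+\mathrm{Sq}^2\rho_2 H^3(M^\circ;\Z)}.
\end{align*}

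The final step would be to recognize this cokernel as $\mathfrak{B}$. From the Bockstein sequence \eqref{eq:bock}, $\mathrm{Ker}~\beta=\rho_2 H^5(M^\circ;\Z)$, so $\beta$ induces an isomorphism
\begin{align*}
\frac{H^5(M^\circ;\Z/2)}{\rho_2 H^5(M^\circ;\Z)}\xrightarrow{\cong}\beta H^5(M^\circ;\Z/2).
\end{align*}
Composing, I obtain a bijection $\mathrm{Cok}~(\Omega k)_{\ast}\cong \beta H^5(M^\circ;\Z/2)/\beta\mathrm{Sq}^2\rho_2 H^3(M^\circ;\Z)$. Since $i^{\ast}\colon H^q(M;R)\to H^q(M^\circ;R)$ is an isomorphism for $q\le 7$ and $R\in\{\Z,\Z/2\}$, the right-hand side agrees with $\mathfrak{B}$, finishing the proof.

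The step I expect to be the main obstacle is pinning down $\sigma(k)$ correctly: one must verify that the product term $\rho_2 l_2\otimes\rho_2 l_4$ in $k$ contributes nothing (so that the $K(\Z,1)$-factor of $\Omega K$ is irrelevant) and that the remaining two summands survive suspension with their natural form, so that the subsequent Bockstein bookkeeping lands precisely on the denominator $\beta\mathrm{Sq}^2\rho_2 H^3(M;\Z)$ appearing in $\mathfrak{B}$.
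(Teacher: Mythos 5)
Your proposal is correct and follows essentially the same route as the paper: identify $V_{0}(M^{\circ})$ with $\mathrm{Ker}\, q_{\ast}$ via the Postnikov stage $h\colon BU \to E$ over the $7$-dimensional complex $M^{\circ}$, apply Lemma \ref{lem:omegak}, compute $\sigma(k) = \mathrm{Sq}^{2}\rho_{2}l_{3} + \rho_{2}l_{5}$ (the product term dying under suspension), and convert the cokernel into $\mathfrak{B}$ via the Bockstein sequence. The only cosmetic difference is that you deduce bijectivity of $h_{\ast}$ directly from $h$ being an $8$-equivalence, whereas the paper quotes injectivity from \cite[Lemma 4.1]{jt65}; both are fine.
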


\begin{proof}
Let us first recall from the proof of Theorem \ref{thm:main4} $(A)$ that the postnikov resolution of the fiber space 
\begin{align*}
\mathcal{C} = (c_{1}, c_{2}, c_{3}) \colon BU \rightarrow K(\Z, 2) \times K(\Z, 4) \times K(\Z, 6) 
\end{align*}
through dimension $9$ is as follows:
\begin{equation*}
\begin{split}
\xymatrix{
                & &       E  \ar[d]^{q}  \ar[r] & K(\Z, 9)  \\
  BU  \ar[urr]^{h} \ar[rr]_-{\mathcal{C}} & & K \ar[r]^-{k} & K(\mathbb{Z}/2, 6),          }
\end{split}
\end{equation*}
where $q$ is the principal fibration with fiber $K(\Z/2, 5)$, and $k$ as classifying map.
Therefore, we have the commutative diagram
\begin{equation*}
\begin{split}
\xymatrix{
& & [M^{\circ}, BU] \ar[d]^{h_{\ast}} \ar[dr]^-{\mathcal{C}_{\ast}}& & \\
[M^{\circ}, \Omega K] \ar[r]^-{(\Omega k)_{\ast}} & [M^{\circ}, K(\Z/2, 5)] \ar[r]^-{j_{\ast}} & [M^{\circ}, E] \ar[r]^-{q_{\ast}} & [M^{\circ}, K] \ar[r]^-{k_{\ast}} & [M^{\circ}, K(\Z/2, 6)], 
}
\end{split}
\end{equation*}
where $h_{\ast}$, $q_{\ast}$, $\mathcal{C}_{\ast}$ and $k_{\ast}$ are the induced functions, $j_{\ast}$ is the function induced by the inclusion $j \colon K(\Z/2, 5) \to E$ of the fiber $K(\Z/2, 5)$ into the total space $E$, 
$(\Omega k)_{\ast}$ is an induced homomorphism and where the bottom sequence is exact.

Since the homomorphisms $h_{\ast} \colon \pi_{r}(BU) \rightarrow \pi_{r}(E)$, $r \le 7$,
are isomorphic by the construction of the Postnikov resolution,
note that $M^{\circ}$ can be regard as the $7$-skeleton of $M$, it follows from \cite[Lemma 4.1]{jt65} that 
\begin{align*}
h_{\ast} \colon [M^{\circ}, BU] \to [M^{\circ}, E]
\end{align*}
is injective.  
Therefore, there is a one to one correspondence between 
\begin{align*}
\mathrm{V}_{0}(M^{\circ}) = \mathrm{Ker} ~  \mathcal{C}_{\ast} \quad \text{and} \quad \mathrm{Ker} ~ q_{\ast}.
\end{align*}
Hence a bijection between
\begin{align*}
\mathrm{V}_{0}(M^{\circ}) \quad \text{and} \quad \mathrm{Cok} ~ (\Omega k)_{\ast}
\end{align*}
by Lemma \ref{lem:omegak}.
Therefore, it remains to prove that 
$\mathrm{Cok} ~ (\Omega k)_{\ast}$ is isomorphic to $\mathfrak{B} = \frac{\beta H^{5}(M; \Z/2)}{\beta \mathrm{Sq}^{2} \rho_{2} H^{3}(M; \Z)}$.

Recall that we have
\begin{align*}
k = \mathrm{Sq}^{2} \rho_{2} l_{4} + \rho_{2} l_{6} + \rho_{2} l_{2} \otimes \rho_{2} l_{4} \in H^{6}(K; \Z/2).
\end{align*}
Then it can be deduced easily from the definition of cohomology suspension and Lemma \ref{lem:sigma} that
 \begin{align*}
 \Omega k = \sigma( k ) = \mathrm{Sq}^{2} \rho_{2} l_{3} + \rho_{2} l_{5}.
 \end{align*}
Hence the homomorphism $(\Omega k)_{\ast} \colon [M^{\circ}, \Omega K] \to [M^{\circ}, K(\Z/2, 5)]$ is just the homomorphism
\begin{align*}
(\Omega k)_{\ast} \colon H^{1}(M^{\circ}; \Z) \times H^{3}(M^{\circ}; \Z) \times H^{5}(M^{\circ}; \Z) \to H^{5}(M^{\circ}; \Z/2)
\end{align*}
given by 
\begin{align*}
(\Omega k)_{\ast} (x, y, z) = \mathrm{Sq}^{2} \rho_{2} y + \rho_{2} z
\end{align*}
for any $(x, y, z)$ in $H^{1}(M^{\circ}; \Z) \times H^{3}(M^{\circ}; \Z) \times H^{5}(M^{\circ}; \Z)$.

Note that the homomorphisms
\begin{align*}
H^{r}(M^{\circ}; G) \cong H^{r}(M; G), \quad r \le 7,
\end{align*}
are isomorphic. Then it follows from the Bockstein sequence \eqref{eq:bock} that
\begin{align*}
\mathrm{Cok} ~ (\Omega k)_{\ast} = & \quad \frac{H^{5}(M^{\circ}; \Z/2)}{\rho_{2} H^{5}(M^{\circ}; \Z) + \mathrm{Sq}^{2} \rho_{2} H^{3}(M^{\circ}; \Z)} \\
\cong 
&  \quad \frac{H^{5}(M; \Z/2)}{\rho_{2} H^{5}(M; \Z) + \mathrm{Sq}^{2} \rho_{2} H^{3}(M; \Z)} \\
\cong 
& \quad \frac{H^{5}(M; \Z/2) / \rho_{2} H^{5}(M; \Z) }{ [ \rho_{2} H^{5}(M; \Z) + \mathrm{Sq}^{2} \rho_{2} H^{3}(M; \Z) ] / \rho_{2} H^{5}(M; \Z) } \\
\cong
& \quad \frac{ \beta H^{5}(M; \Z/2)}{ \beta \mathrm{Sq}^{2} \rho_{2} H^{3}(M; \Z) } \\
=
& \quad \mathfrak{B},
\end{align*}
which completes the proof.
\end{proof}


\section{ Proof of Theorem \ref{thm:main3} }
\label{s:main3}

By the identifications in the beginning of Section \ref{s:main4}, we will regard the function
\begin{align*}
\pi_{\ast} \colon \mathrm{Vect}_{\C}^{3} (M) \to \mathrm{Vect}_{\C}^{4} (M).
\end{align*}
defined in section \ref{s:intro} as the fuction
\begin{align*}
\pi_{\ast} \colon [M, BU(3)] \rightarrow [M, BU]
\end{align*}
induced by the inclusion map
$\pi \colon BU(3) \to BU$.

\begin{proof}[Proof of Theorem \ref{thm:main3} $(C)$]
Consider the Postnikov resolution of the inclusion map 
\begin{align*}
\pi \colon BU(3) \to BU.
\end{align*}
Its homotopy fiber is $U/U(3)$.
It is known that we have 
\begin{align*}
& \pi_{r}(U) \cong \pi_{r}(U(5)) \quad \text{for} \quad r \le 9, \\
& \pi_{r}(U(3)) \cong \pi_{r}(U(5)) \quad \text{for} \quad r \le 5,
\end{align*}
and $\pi_{6}(U(5)) = 0$. It follows that 
\begin{align*}
& \pi_{r}(U/U(3)) \cong \pi_{r}(U(5)/U(3)) \quad \text{for} \quad r \le 9, \\
& \pi_{r}(U(5)/U(3)) = 0  \quad \text{for} \quad r \le 6.
\end{align*}
Moreover, Gilmore \cite[pp. 630 - 631]{gi67} tells us that 
\begin{align*}
\pi_{r}(U(5)/U(3))\cong
  \begin{cases}
    \Z, & r = 7, 9; \\
    0, & r = 8.
  \end{cases}
\end{align*}
Therefore, summarize the facts above, we have 
\begin{align*}\pi_{r}(U/U(3))\cong
  \begin{cases}
    0, & r \le 8, r \neq 7; \\
    \mathbb{Z}, &  r = 7; \\
    \Z , &  r = 9.
  \end{cases}
\end{align*}
Then the Postnikov resolution of the map $i$ through dimension $10$ can be shown as
\begin{equation*}
\begin{split}
\xymatrix{
                & &       E  \ar[d]^{q}  \ar[r] & K(\Z, 10)  \\
  BU(3)  \ar[urr]^{h} \ar[rr]_-{\pi} & & BU \ar[r]^-{k} & K(\mathbb{Z}, 8).           }
\end{split}
\end{equation*}
Here $q$ is a principal fibration with fiber $K(\Z, 7)$ and $k$ as the classifying map.
Regard 
\begin{align*}
k \in H^{8}(BU; Z)
\end{align*}
as the class represented by the map $k$.
Since the kernel of the induced homomorphism 
\begin{align*}
\pi^{\ast} \colon H^{8}(BU; \Z) \to H^{8}(BU(3); \Z)
\end{align*}
is generated by the universal Chern class $c_{4}$, 
it follows that $k = c_{4}$.
Thus, recall that the dimension of $M$ is $8$, the proof is complete. 
\end{proof}

\begin{proof}[Proof of Theorem \ref{thm:main3} $(D)$]
Let us first consider the induced function
\begin{align*}
\pi_{\ast} \colon [M, BU(3)] \to [M, BU].
\end{align*}
For any $\eta \in \mathrm{Im} ~ \pi_{\ast}$, we claim that there is a bijection between $\pi_{\ast}^{-1} ( \eta )$ and 
\begin{align*}
\mathfrak{T}  : = \frac{ H^{7}(M; \Z) } { \{ f^{\ast}( \gamma_7 ) + u_1 f^{\ast} ( \gamma_5 ) + u_2 f^{\ast} ( \gamma_3 ) + u_{3} f^{\ast} (\gamma_{1}) ~ | ~ f \in [M, U] \} },
\end{align*}
where $\gamma_{2r - 1} = \sigma ( c_{r} ) \in H^{2r-1}(U)$, $1 \le r \le 4$, are the generators of the exterior algebra
\begin{align*}
H^{\ast}(U; \Z) \cong \Lambda(\gamma_{1}, \gamma_{3}, \gamma_{5}, \gamma_{7}, \cdots).
\end{align*}
For any $( u_{1}, u_{2}, u_{3} ) \in \mathrm{Im} ~ \mathcal{C}_{3} $, it follows from Theorem \ref{thm:main3} $(C)$ that 
\begin{align*}
\mathcal{C}_{4}^{-1} ( u_{1}, u_{2}, u_{3}, 0 ) \subset \mathrm{Im} ~ \pi_{\ast},
\end{align*}
and from Theorem \ref{thm:main4} $(B)$ that there is a bijection between $\mathcal{C}_{4}^{-1} ( u_{1}, u_{2}, u_{3}, 0 )$ and $\mathfrak{B}$.
Moreover, we must have
\begin{align*}
\mathcal{C}_{3}^{-1} ( u_{1}, u_{2}, u_{3} ) =  \pi_{\ast}^{-1} ( \mathcal{C}_{4}^{-1} ( u_{1}, u_{2}, u_{3}, 0 ) )
\end{align*}
by the definitions of $\mathcal{C}_{3}$, $\mathcal{C}_{4}$ and $\pi_{\ast}$.
Therefore, note that $\mathfrak{T}$ is only depend on the Chern classes of $\eta$, the claim above implies that there is a bijection between
\begin{align*}
\mathcal{C}_{3}^{-1} ( u_{1}, u_{2}, u_{3} ) \quad \text{and} \quad \mathfrak{B} \times \mathfrak{T},
\end{align*}
which completes the proof.

Now, let us prove the claim above.

According to the Postnikov resolution of $\pi$ as in the proof of Theorem \ref{thm:main3} $(C)$,
we have the following commutative diagram
\begin{equation*}
\begin{split}
\xymatrix{
& & [M, BU(3)] \ar[d]^{h_{\ast}} \ar[dr]^-{\pi_{\ast}}& & \\
[M, U] \ar[r]^-{\Omega c_{4}} & [M, K(\Z, 7)] \ar[r]^-{j_{\ast}} & [M, E] \ar[r]^-{q_{\ast}} & [M, BU] \ar[r]^-{c_{4}} & [M, K(\Z, 8)], 
}
\end{split}
\end{equation*}
where the bottom sequence is exact.
By the construction of the Postnikov resolution, the induced homomorphisms
\begin{align*}
h_{\ast} \colon \pi_{r}(BU(3)) \rightarrow \pi_{r}(E), \quad r \le 8,
\end{align*}
are isomorphic.
Hence, it follows from \cite[Lemma 4.1]{jt65} that 
\begin{align*}
h_{\ast} \colon [M, BU(3)] \to [M, E]
\end{align*}
is a bijection.  
Therefore, for any $\eta \in [M, BU]$ with $\eta \in \mathrm{Im} ~ \pi_{\ast}$, i.e., $c_{4}(\eta) = 0$, there is a one to one correspondence between 
\begin{align*}
\pi_{\ast}^{-1} (\eta) \quad \text{and} \quad q_{\ast}^{-1}( \eta ).
\end{align*}
Moreover, since $q$ is a principal fibration with fiber $K(\Z, 7)$ and $c_{4}$ as the classifying map,
it follows from the Lemma \ref{lem:delta} that there is a bijection between
\begin{align*}
q_{\ast}^{-1}( \eta ) \quad \text{and} \quad \mathrm{Cok} ~ \Delta (c_{4}, \eta).
\end{align*}
Hence it remains to prove that $\mathrm{Cok} ~ \Delta (c_{4}, \eta) = \mathfrak{T}$.

Let $\mu \colon BU \times BU \to BU$ be the canonical $H$-structure of $BU$.
It is known that
\begin{align*}
\mu^{\ast} (c_{4}) = c_{4} \times 1 + 1 \times c_{4} + c_{1} \times c_{3} + c_{2} \times c_{2} + c_{3} \times c_{1}.
\end{align*} 
Therefore, recall from Section \ref{s:pre} that
\begin{align*}
\Delta(c_{4}, \eta) \colon [M, U] \rightarrow [M, K(\Z, 7)]
\end{align*}
is a homomorphism given by
\begin{align*}
\Delta(c_{4}, \eta) ( f )  = & f^{\ast}\sigma (c_{4}) + ( f^{\ast} \sigma (c_{1}) ) ( \eta^{\ast}(c_{3})) + ( f^{\ast} \sigma (c_{2}) ) ( \eta^{\ast}(c_{2}))  + ( f^{\ast} \sigma (c_{3}) ) ( \eta^{\ast}(c_{1})) \\
= & f^{\ast} ( \gamma_{7} ) + f^{\ast} ( \gamma_{1} ) c_{3}(\eta) + f^{\ast} ( \gamma_{3} ) c_{2}(\eta) + f^{\ast} ( \gamma_{5} ) c_{1}(\eta),
\end{align*} 
for any $f \in [M, U]$.
It follows that
\begin{align*}
\mathrm{Cok} ~ \Delta(c_{4}, \eta) = \mathfrak{T} : = \frac{ H^{7}(M; \Z) } { \{ f^{\ast}( \gamma_7 ) + u_1 f^{\ast} ( \gamma_5 ) + u_2 f^{\ast} ( \gamma_3 ) + u_{3} f^{\ast} (\gamma_{1}) ~ | ~ f \in [M, U] \} }.
\end{align*}
The claim is proved.
\end{proof}



%


\end{document}